\documentclass[11pt]{article}

\usepackage[T1]{fontenc}
\usepackage[utf8]{inputenc}
\usepackage{amsmath,amssymb,amsthm,mathtools}
\usepackage{enumitem}
\usepackage{cite}
\usepackage[hidelinks]{hyperref}
\usepackage{geometry}
\newcommand{\C}{\mathbb C}
\newcommand{\Z}{\mathbb Z}
\newcommand{\CP}{\mathbb{CP}}
\newcommand{\PT}{\mathbb{PT}}
\newcommand{\calG}{\mathcal G}

\newcommand{\calC}{\mathcal C}
\newcommand{\calO}{\mathcal O}
\newcommand{\calL}{\mathcal L}
\newcommand{\ch}{\operatorname{ch}}

\newcommand{\Hom}{\operatorname{Hom}}
\newcommand{\diag}{\operatorname{diag}}
\newcommand{\Gr}{\operatorname{Gr}}

\theoremstyle{definition}
\newtheorem{definition}{Definition}[section]
\newtheorem{remark}[definition]{Remark}
\newtheorem{example}[definition]{Example}

\theoremstyle{plain}
\newtheorem{proposition}[definition]{Proposition}
\newtheorem{theorem}[definition]{Theorem}
\newtheorem{lemma}[definition]{Lemma}
\newtheorem{corollary}[definition]{Corollary}

\title{Cyclic source pairings for Penrose--Sparling non-Hausdorff twistor spaces}
\author{Ioannis P.~ZOIS, Exeter College, Turl Street, OX1 3DP, Oxford,\\
  email: i.zois@exeter.oxon.org}
\date{}

\begin{document}
\maketitle

\begin{abstract}
We introduce  noncommutative geometry techniques in order to reinterpret the
Penrose--Sparling non-Hausdorff twistor space of the anti-self-dual Coulomb
field by means of an explicit etale gluing groupoid and its convolution
algebra.  If
\(X=\PT\cong\CP^3\) and \(Q\subset X\) is the Coulomb quadric, the
non-Hausdorff twistor double is obtained from two copies of \(X\) by identifying
only the complement \(U=X\setminus Q\).  The corresponding groupoid algebra is
\[
A_Q\cong
\left\{
\begin{pmatrix} f_+&g\\ h&f_-\end{pmatrix}:
 f_\pm\in C(X),\; g,h\in C_0(U)
\right\},
\]
and fits into the exact sequence
\[
0\longrightarrow C_0(U,M_2(\C))
\longrightarrow A_Q
\longrightarrow C(Q)\oplus C(Q)
\longrightarrow 0.
\]
This algebraic model keeps track of both the identified open part and the two
non-separated copies of the source quadric.

We compute two kinds of Chern--Connes pairings.  The strict tangent-module
analogue, obtained from \([T_{\mathbb R}\CP^3\otimes\C]\), vanishes because
\[
\operatorname{ch}_3(T^{1,0}\CP^3)+
\operatorname{ch}_3(T^{0,1}\CP^3)=0.
\]
By contrast, the Penrose--Sparling Coulomb line bundle \(\calC_n\) defines a
\(K_0(A_Q)\)-class, and the relative cyclic cocycle supported on the two
non-separated copies of a ruling line \(L\subset Q\) gives
\[
\mathcal Q(\calC_n)=
\frac12\left\langle \varphi_L^+-\varphi_L^-,[\calC_n]\right\rangle=n.
\]
Thus the source-adapted cyclic pairing recovers the Coulomb charge.

We also formulate the non-abelian version in principal-bundle language.  For a
connected complex reductive group \(G\), a maximal torus \(T\subset G\), and a
cocharacter \(\lambda:\C^*\to T\), the source is a principal \(G\)-bundle
modification of type \(\lambda\) along \(Q\).  Locally in a normal coordinate to
\(Q\), this modification is represented by the affine-Grassmannian point
\[
z^{-\lambda}G(\C[[z]])\in \Gr_G=G(\C((z)))/G(\C[[z]]),
\]
with gauge-invariant datum the corresponding Schubert orbit, or equivalently
the Weyl orbit of \(\lambda\).  In a representation \(\rho:G\to GL(V_\rho)\),
the associated source bundle decomposes weight-by-weight and the pairings are
\[
\mathcal Q_{1,\rho}(\lambda)=
\sum_\mu(\dim V_\mu)\langle\mu,\lambda\rangle,
\qquad
\mathcal Q_{2,\rho}(\lambda)=
\sum_\mu(\dim V_\mu)\langle\mu,\lambda\rangle^2.
\]
The first number is the determinant, or abelianized, charge; the second is a
quadratic Weyl-invariant moment which can detect semisimple coweight data.
\end{abstract}

\section{Introduction}

The starting point of this article is the Noncommutative Foliation Invariant
(NCFI) introduced in the author's earlier work on foliated manifolds, where the original motivation came from non-linear $\sigma$-models in physics, see \cite{Zois2000,ZoisMTheory,ZoisKTheory,ZoisInvariants}.
In its strict even-codimensional form, the NCFI is a Chern--Connes pairing
\[
Z(\mathcal F)=\langle \varphi_{\mathcal F},[e_{\mathcal F}]\rangle,
\]
where \(\varphi_{\mathcal F}\) is Connes' transverse fundamental cyclic cocycle
and \([e_{\mathcal F}]\) is the \(K\)-theory class supplied by the transverse
geometric module.  This formula belongs to the circle of ideas in which
foliation leaf spaces, often pathological as ordinary quotients, are replaced by
operator algebras and cyclic cohomology \cite{ConnesSurvey1982,ConnesTF1986,
ConnesSkandalis1984,Connes1994}.  The present article asks what remains of this
philosophy in a twistor-theoretic situation where non-Hausdorff geometry appears
for a completely different reason: the presence of sources.

The test case is the Penrose--Sparling non-Hausdorff twistor space for the
anti-self-dual Coulomb field.  Classically, twistor theory translates
space-time field equations into complex geometry on twistor space
\cite{Penrose1967,Penrose1976,PenroseRindler1,PenroseRindler2,Ward1977,
AtiyahHitchinSinger1978,EastwoodPenroseWells1981,BastonEastwood1989,
WardWells1990,MasonWoodhouse1996,AtiyahDunajskiMason2017}.  Fields with
sources, however, force a departure from the ordinary Hausdorff twistor-space
picture.  In the Penrose--Sparling construction one takes two copies of
projective twistor space and identifies them along the complement of a quadric
\(Q\); the two copies of \(Q\) remain non-separated.  This construction and its
relative-cohomological interpretation were developed by Penrose--Sparling,
Bailey, Bailey--Singer and Woodhouse--Mason, and they continue to reappear in
modern line-defect formulations of twistorial field theory
\cite{PenroseSparling1979,HughstonMason1990,Bailey1985,BaileySinger1989,
WoodhouseMason1988,GarnerPaquette2025}.

The point of the present article is not that the Penrose--Sparling space is a
foliated manifold.  It is not.  The point is that it is an ideal testing ground
for the same noncommutative-geometric mechanism that makes the NCFI possible.
The bad quotient is replaced by a groupoid.  The groupoid is represented by a
convolution algebra.  Geometric gluing data become \(K\)-classes.  Cycles or
relative cycles become cyclic cocycles.  Numerical invariants are obtained by
Chern--Connes pairings.  These are precisely the tools developed by Connes for
foliations and singular quotient spaces \cite{Connes1980,ConnesThom1981,
Connes1985,Connes1994,ConnesTF1986,ConnesMoscovici1995}, together with the standard groupoid,
Morita-equivalence and cyclic-theoretic framework \cite{Renault1980,
MuhlyRenaultWilliams1987,Paterson1999,RaeburnWilliams1998,Williams2007,
Cuntz1997,CuntzQuillen1995,CuntzQuillen1997,Loday1998,Khalkhali2009}.

One must be precise about the word ``noncommutative''.  A non-Hausdorff space
can still have commutative algebras of scalar functions.  The noncommutative
claim here is the following: when the quotient space is not an adequate
geometric object, one retains the equivalence relation or gluing data as a groupoid and studies its convolution algebra.  For the Penrose--Sparling double
this algebra is explicit:
\[
A_Q\cong
\left\{
\begin{pmatrix}
f_+&g\\ h&f_-
\end{pmatrix}:
 f_\pm\in C(X),\quad g,h\in C_0(X\setminus Q)
\right\}.
\]
Over the complement of \(Q\) it is a full \(2\times2\) matrix algebra, while
over \(Q\) it degenerates to two diagonal copies.  Thus the operator algebra
remembers exactly the gluing and exactly the locus where Hausdorff separation breaks down.

The first computation is intentionally conservative.  If one copies the strict
NCFI prescription and pairs a top-dimensional fundamental cyclic cocycle with
the tangent-module class
\[
[T_{\mathbb R}\CP^3\otimes\C],
\]
then the value is zero.  This is not an accident and not a failure of the
calculation.  It is the same Chern-character cancellation that occurs for
complexifications of real bundles in real dimension \(2\bmod4\).  In this
strict tangent-module sense the non-Hausdorff Coulomb twistor space has a
vanishing NCFI-type invariant.

The useful invariant is obtained by changing the \(K\)-class and the cyclic
cocycle to match the physical source.  The Coulomb field is encoded not by the
tangent bundle of twistor space but by a line bundle \(\calC_n\) on the
non-Hausdorff double.  The source itself lives on the doubled quadric, and a
ruling line \(L\subset Q\) gives two non-separated copies \(L_+\) and \(L_-\).
The relative cyclic cocycle
\[
\varphi_L^{\rm rel}=\varphi_L^+-\varphi_L^-
\]
pairs with \([\calC_n]\in K_0(A_Q)\), and the normalized pairing gives
\[
\frac12\langle\varphi_L^+-\varphi_L^-,[\calC_n]\rangle=n.
\]
Thus the original strict tangent-module pairing vanishes, but a source-adapted
Connes pairing recovers the Coulomb charge.

This is the advertised point for twistor geometry.  Noncommutative geometry does
not merely say that the space is singular or non-Hausdorff.  It supplies a
concrete algebra \(A_Q\), a computable exact sequence, natural \(K\)-classes for
source bundles, cyclic cocycles supported on the doubled source locus, and
explicit numerical pairings.  The result is compatible with the older
relative-cohomology description of sources \cite{Bailey1985,BaileySinger1989,
HughstonMason1990}, but it packages the computation in a form that is natural
for non-Hausdorff quotients and for comparison with foliation algebras.

The second half of the article extends this source-pairing idea from the abelian
Coulomb bundle to Cartan-valued non-abelian source data.  In non-abelian gauge
theory, magnetic charges are naturally coweights modulo Weyl group, as in the
Goddard--Nuyts--Olive and Wilson--'t Hooft line-operator viewpoints
\cite{GoddardNuytsOlive1977,KapustinWitten2007,MirkovicVilonen2007,Zhu2016}.
For a reductive group \(G\), a cocharacter \(\lambda:\C^*\to T\), and a
representation \(\rho:G\to GL(V_\rho)\), the source bundle decomposes
weight-by-weight.  The line-supported pairing detects the determinant or
abelianized charge, while a quadric-supported four-dimensional cyclic pairing
detects the quadratic coweight moment.  These computations connect the old
Penrose--Sparling non-Hausdorff construction with modern twistorial line-defect
language \cite{GarnerPaquette2025,AdamoBognaMasonSharma2025}.

\begin{remark}
The article is deliberately written as a bridge between two communities.  For
readers coming from noncommutative geometry, the construction is a small
NCFI-motivated test case outside the literal category of foliations.  For
readers coming from twistor theory, the message is that the non-Hausdorff
source geometry admits an operator-algebraic model in which charge computations
become ordinary Chern--Connes pairings.  No claim is made that the source
pairings below are the original NCFI; they are NCFI-inspired cyclic pairings
adapted to the Penrose--Sparling groupoid.
\end{remark}

\section{The Ward correspondence and sourced twistor geometry}
\label{sec:ward-correspondence}

The classical reason why twistor geometry is the right language for gauge fields
is the Penrose--Ward correspondence.  In its flat complex form, let
\(M_{\mathbb C}\) denote complexified conformal four-space and let
\(\PT\) denote projective twistor space.  The double fibration is
\[
\PT \xleftarrow{\;\mu\;} F \xrightarrow{\;\nu\;} M_{\mathbb C},
\]
where the fibre over a space-time point \(x\in M_{\mathbb C}\) is a projective
line
\[
L_x\cong\CP^1\subset \PT.
\]
The Ward correspondence says, schematically, that gauge-equivalence classes of
anti-self-dual Yang--Mills fields on space-time correspond to holomorphic
principal bundles on twistor space which are holomorphically trivial on each
twistor line \(L_x\).  In the vector-bundle formulation for gauge group
\(GL_r(\C)\), this is the familiar correspondence between anti-self-dual
Yang--Mills connections and holomorphic rank \(r\) vector bundles on twistor
space which are trivial on the twistor lines.  The original construction and
its instanton refinements go back to Ward, Atiyah--Ward and the ADHM circle of
ideas \cite{Ward1977,AtiyahWard1977,AtiyahDrinfeldHitchinManin1978,
AtiyahHitchinSinger1978,WardWells1990,MasonWoodhouse1996}.  Standard accounts
also make clear that real forms, compact structure groups and regularity
conditions are imposed as additional reality and positivity conditions on the
holomorphic bundle data \cite{PenroseRindler2,WardWells1990,MasonWoodhouse1996}.

\begin{theorem}[Penrose--Ward correspondence, schematic form]
\label{thm:ward-schematic}
On a suitable complexified self-dual four-dimensional background, holomorphic
principal \(G\)-bundles
\[
P\longrightarrow \PT
\]
which are trivial on the twistor lines \(L_x\cong\CP^1\) correspond, up to the
usual equivalences and reality conditions, to anti-self-dual Yang--Mills
connections with complex gauge group \(G\).  For \(G=GL_r(\C)\), this becomes the
standard statement in terms of holomorphic rank \(r\) vector bundles on
\(\PT\).  For compactified Euclidean space, the algebraic version identifies
instanton data with suitable algebraic bundles on \(\CP^3\), subject to the
corresponding reality and line-triviality conditions.
\end{theorem}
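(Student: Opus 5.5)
The plan is the standard Ward construction in both directions, organised around the double fibration \(\PT \xleftarrow{\mu} F \xrightarrow{\nu} M_{\C}\), with \(\nu\) a \(\CP^1\)-bundle and \(\mu\) having fibres the \(\alpha\)-planes (totally null self-dual 2-planes) in \(M_{\C}\).

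\emph{From bundles to connections.} Start with a holomorphic principal \(G\)-bundle \(P\to\PT\) that is trivial on every line \(L_x\). Pull it back to \(\mu^*P\to F\). Triviality on \(L_x=\nu^{-1}(x)\) means \(\mu^*P\) is trivial along the fibres of \(\nu\). Since \(H^0(\CP^1,\calO)=\C\), the space of holomorphic sections over each fibre is a single \(G\)-torsor. These torsors form a holomorphic principal bundle \(\tilde P\to M_{\C}\) with \(\nu^*\tilde P\cong\mu^*P\).

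\emph{The connection.} On \(\mu^*P\) there is a canonical flat partial connection along the fibres of \(\mu\), because the bundle is pulled back from \(\PT\). Transport it to \(\nu^*\tilde P\). In spinor notation this gives operators \(\pi^{A'}(\partial_{AA'}+A_{AA'})\) on \(\tilde P\), where \(A_{AA'}\) depends on \(\pi\) only linearly. The linearity is forced by the sheaf computation \(H^0(\CP^1,\calO(1))\cong\C^2\), and it is the step where the degree of the relevant normal bundle matters. Linearity in \(\pi\) means \(A_{AA'}\) is an honest connection 1-form on \(\tilde P\). Integrability of the partial connection on every \(\alpha\)-plane says \(\pi^{A'}\pi^{B'}F_{AA'BB'}=0\) for all \(\pi\). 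This forces \(F_{A'B'}=0\), i.e.\ the curvature is anti-self-dual.

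\emph{From connections to bundles.} Given an anti-self-dual connection on a suitable (convex or geodesically simple) region, restrict it to each \(\alpha\)-plane, where it is flat. Set the fibre of \(P\) over a twistor \(Z\) to be the space of covariantly constant sections on the \(\alpha\)-plane \(Z\). Holomorphic dependence on \(Z\) gives \(P\to\PT\). Triviality on \(L_x\) holds because the fibres over \(L_x\) are all canonically identified with the fibre of \(\tilde P\) at \(x\) via the common point \(x\).

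\emph{Equivalences and refinements.} The two constructions are mutually inverse up to gauge transformations on space-time and bundle isomorphisms on twistor space. For \(G=GL_r(\C)\), pass to associated vector bundles. For the compactified Euclidean statement, use the real structure on \(\CP^3\) (the quaternionic involution) and the fibration \(\CP^3\to S^4\) as in Atiyah--Hitchin--Singer. Compact structure group corresponds to a positive-definite real form on the bundle. Algebraicity follows from GAGA on \(\CP^3\).

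\emph{Main obstacle.} The delicate point is the global version. The set of lines on which \(P\) is trivial is open but need not be everything, and "suitable background" must be specified so that \(\nu\) has connected, simply connected fibres and \(\mu\) has connected fibres. Otherwise \(\tilde P\) is not well defined, or the inverse construction fails. I would handle this by restricting to regions where these topological conditions hold, following Ward and Mason--Woodhouse. I would treat the reality and regularity conditions as hypotheses rather than prove them. This is consistent with the schematic phrasing of Theorem~\ref{thm:ward-schematic}, which the paper cites rather than proves.
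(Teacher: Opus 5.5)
Your outline is the classical Ward argument: Liouville on \(\calO(1)\) to get a \(\pi\)-independent \(A_{AA'}\), flatness on \(\alpha\)-planes giving \(F_{A'B'}=0\), covariantly constant sections on \(\alpha\)-planes for the converse, and the Atiyah--Hitchin--Singer real structure plus GAGA for the \(S^4\) case; this is exactly what the paper invokes by citation, without giving a proof of its own. One slip is in your last paragraph: the topological hypothesis belongs on \(\mu\), not \(\nu\), since the intersections of \(\alpha\)-planes with the region must be connected \emph{and simply connected} (otherwise holonomy obstructs the inverse construction) while the \(\nu\)-fibres are \(\CP^1\)'s and need no assumption, and your convexity hypothesis already guarantees this.
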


\begin{remark}
The theorem is recalled only to fix the geometric dictionary used in this
article.  We do not reprove the Ward correspondence here.  What we use is the
part of the dictionary which is essential for the present computation: gauge
field data are encoded by holomorphic principal-bundle data on twistor space,
and source data are encoded by singular, relative or non-Hausdorff modifications
of that bundle data.
\end{remark}

The Penrose--Sparling Coulomb construction is precisely where the source-free
Ward picture has to be enlarged.  For a nonsingular anti-self-dual field the
Ward bundle lives on the ordinary Hausdorff twistor space.  For a Coulomb source
on a world-line, the family of twistor lines incident with the source sweeps out
a quadric
\[
Q\subset \PT\cong\CP^3.
\]
The source can be encoded by a holomorphic gluing which is regular away from
\(Q\) but has controlled singular behaviour along \(Q\).  Penrose and Sparling's
non-Hausdorff twistor space implements this by taking two copies of \(\PT\) and
identifying them only over \(\PT\setminus Q\) \cite{PenroseSparling1979,
HughstonMason1990}.  Bailey's relative-cohomology treatment of fields with
sources gives the complementary cohomological viewpoint: the source is not
represented by ordinary cohomology on a single Hausdorff twistor space, but by
relative data attached to the source locus \cite{Bailey1985,BaileySinger1989}.
Modern twistorial line-defect constructions use the same broad principle:
sources force holomorphic gauge theory onto a non-Hausdorff or modified twistor
space \cite{GarnerPaquette2025,AdamoBognaMasonSharma2025}.

For later use we state the sourced Ward dictionary in the form needed in this
article.  Let \(G\) be a connected complex reductive group, let \(T\subset G\) be
a maximal torus, and let
\[
\lambda:\C^*\longrightarrow T
\]
be a cocharacter.  The cocharacter is the magnetic, or coweight, source datum;
its gauge-invariant form is its Weyl orbit.  Let
\[
q\in H^0(X,\calO_X(Q))=H^0(\CP^3,\calO_{\CP^3}(2))
\]
be a defining section of the quadric and set \(U=X\setminus Q\).  Over \(U\),
\(q\) is nowhere zero and therefore defines a gluing function
\[
q^{-\lambda}:U\longrightarrow T\subset G.
\]
In local coordinates transverse to \(Q\), if \(z=0\) is a local equation for
\(Q\), the same modification is represented by the loop
\[
z^{-\lambda}\in G(\C((z))).
\]
Its class
\[
z^{-\lambda}G(\C[[z]])\in
\Gr_G:=G(\C((z)))/G(\C[[z]])
\]
lies in the Schubert orbit determined by the dominant representative of
\(\lambda\).  This is the standard affine-Grassmannian language for
modifications of principal \(G\)-bundles along a divisor or puncture
\cite{PressleySegal1986,BeauvilleLaszlo1995,KapustinWitten2007,
MirkovicVilonen2007,Zhu2016}.  It is also the language in which non-abelian
line-defect and coweight data naturally appear in modern twistor treatments
\cite{GarnerPaquette2025,AdamoBognaMasonSharma2025}.

\begin{theorem}[Sourced Ward correspondence, principal-bundle form used here]
\label{thm:sourced-ward}
The Penrose--Sparling non-Hausdorff twistor double associated with the source
quadric \(Q\) carries the following sourced Ward data.
\begin{enumerate}[label=\textup{(\roman*)}]
\item A source-free anti-self-dual field is represented by a holomorphic
principal \(G\)-bundle on the ordinary twistor space, trivial on the twistor
lines.
\item A source of magnetic type \(\lambda\) along the world-line whose twistor
incidence divisor is \(Q\) is represented by a principal \(G\)-bundle on the
non-Hausdorff double, obtained by gluing the two sheets over \(U=X\setminus Q\)
with transition function \(q^{-\lambda}\).
\item At every smooth point of \(Q\), the formal normal type of the source is
the affine-Grassmannian point \(z^{-\lambda}G(\C[[z]])\), or equivalently the
Schubert orbit \(\Gr_G^\lambda\).  Up to gauge transformation, the invariant
source type is the Weyl orbit of \(\lambda\).
\item For every representation \(\rho:G\to GL(V_\rho)\), the associated vector
bundle decomposes into ordinary Penrose--Sparling line-bundle summands indexed
by the weights of \(\rho\).
\end{enumerate}
\end{theorem}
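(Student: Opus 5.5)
Most of the statement is definitional, so the plan is to verify each item against the dictionary already fixed, treating Theorem~\ref{thm:ward-schematic} as a black box.

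\textbf{Item (i).} This is simply Theorem~\ref{thm:ward-schematic} restricted to the Hausdorff sheet \(X=\PT\). No new argument is needed.

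\textbf{Item (ii).} Here the content is that the gluing actually defines a principal bundle on the double \(X_+\cup_U X_-\). The double has an open cover consisting of the two sheets \(X_\pm\), with overlap exactly \(U\). A principal \(G\)-bundle on such a space is determined by trivial bundles on each sheet together with one transition function \(U\to G\); there is no cocycle condition, because only two charts are involved. Since \(q\) is nowhere zero on \(U\), the composite \(q^{-\lambda}=\lambda(q)^{-1}\) is holomorphic with values in \(T\subset G\).

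One technical point must be handled. The section \(q\) lies in \(\calO(2)\) rather than in the function ring, so \(q^{-\lambda}\) is not literally a function. I would fix this in one of two ways: either twist by the \(T\)-bundle \(\calO(2)^{\lambda}\) on each sheet, or work on affine patches and use the ratio \(q/\ell^2\) for a linear form \(\ell\). In either case one checks that the resulting bundle on each sheet is the standard homogeneous one. The physical identification with a magnetic source of type \(\lambda\) is then taken as the Penrose--Sparling/Bailey interpretation and is not reproved.

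\textbf{Item (iii).} Choose local coordinates near a smooth point of \(Q\) with \(Q=\{z=0\}\). Then \(q=z\cdot u\) with \(u\) a unit, so
\[
q^{-\lambda}=z^{-\lambda}u^{-\lambda},
\qquad u^{-\lambda}\in T(\C[[z]])\subset G(\C[[z]]).
\]
Hence the formal class is \(z^{-\lambda}G(\C[[z]])\), independent of the choices made. A change of gauge multiplies on the left and right by elements of \(G(\C[[z]])\). By the Cartan decomposition
\[
G(\C((z)))=\bigsqcup_{\lambda^+\ \text{dominant}} G(\C[[z]])\,z^{\lambda^+}G(\C[[z]]),
\]
the double coset, or equivalently the Schubert orbit, is determined by the dominant representative, which is to say by the Weyl orbit \(W\lambda\). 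The sign \(-\lambda\) is harmless: since \(-w_0\) preserves dominance, the orbits of \(\pm\lambda\) correspond under a fixed involution.

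\textbf{Item (iv).} Restrict \(\rho\) to \(T\) and decompose \(V_\rho=\bigoplus_\mu V_\mu\) into weight spaces. Since the transition function takes values in \(T\), the associated bundle has transition function acting on \(V_\mu\) by the scalar
\[
\mu(q^{-\lambda})=q^{-\langle\mu,\lambda\rangle}.
\]
So the associated bundle is the direct sum of \(\dim V_\mu\) copies of the Penrose--Sparling line bundle glued by \(q^{-n}\) with \(n=\langle\mu,\lambda\rangle\), namely \(\calC_{n}\) up to the sign convention.

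\textbf{Main obstacle.} I expect the hardest part to be the global well-definedness in (ii): the section-versus-function issue for \(q\), and checking that the bundle on each sheet is trivial on generic twistor lines. I would resolve it first by the \(\calO(2)\)-twist described above.
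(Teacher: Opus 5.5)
Your treatment of (i), (iii) and (iv) follows the paper's route. Your (iii) is sharper than the paper's sketch. Absorbing the unit \(u^{-\lambda}\in T(\C[[z]])\) shows that the class \(z^{-\lambda}G(\C[[z]])\) does not depend on the local equation or on the trivialization of \(\calO(2)\). The Cartan decomposition, together with your \(-w_0\) remark, is what actually justifies ``Schubert orbit \(=\) Weyl orbit'' in the paper's convention \(\Gr_G^\lambda=G(\C[[z]])z^{-\lambda}G(\C[[z]])/G(\C[[z]])\); the paper only says that changing the torus acts by \(W\). The problem is item (ii).

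Your opening claim, that a bundle on the double is given by trivial bundles on both sheets plus a map \(U\to G\), is false, and it is false for the bundle at hand.
\begin{itemize}
\item Since \(q^{-1}\) is a nowhere-vanishing section of \(\calL_Q=\calO_X(-2)\) over \(U\), the expression \(q^{-\lambda}=\lambda(q^{-1})\) is a nowhere-vanishing section over \(U\) of \(P_{\lambda,-}=\calL_Q^\times\times_{\C^*,\lambda}G\). The gluing only makes sense as the isomorphism \(1\mapsto q^{-\lambda}\) from the trivial bundle on \(X_+\) to \(P_{\lambda,-}\) on \(X_-\). This is exactly the paper's choice: trivial bundle on the plus sheet, type-\(\lambda\) modification on the minus sheet.
\item The twist must be relative. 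Twisting both sheets by the same \(T\)-bundle, as your first fix literally says, makes the gluing again a \(T\)-valued function on \(U\), which \(q^{-\lambda}\) is not; it is a section of \(\calL_Q^\times\times_{\C^*,\lambda}T\).
\item Your affine-patch fix works only after one further step. The patchwise functions \((q/\ell^2)^{-\lambda}\) differ by \((\ell/\ell')^{2\lambda}\), and these discrepancies must be absorbed as transition functions of one sheet, which reproduces \(P_{\lambda,-}\).
\end{itemize}

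Consequently, your announced main obstacle is a false target. \(P_{\lambda,-}\times_G V_\rho\) restricts to every line of \(X\) as \(\bigoplus_\mu\calO(-2\langle\mu,\lambda\rangle)\otimes V_\mu\). This is nontrivial as soon as some \(\langle\mu,\lambda\rangle\neq0\), and this sheetwise jump is exactly what Theorems~\ref{thm:charge} and~\ref{thm:nonabelian-linear-charge} later measure. Item (ii) asserts no line-triviality; that condition belongs to the source-free item (i). So the check should be dropped, not proved.

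With \(P_{\lambda,-}\) in place, (iv) also needs that both sheet bundles reduce to \(T\), not only the transition function. Once that holds, the weight-\(\mu\) summand is exactly \(\calC_{\langle\mu,\lambda\rangle}\otimes V_\mu\), with no sign adjustment.
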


\begin{proof}
The first item is the classical Penrose--Ward correspondence.  For the second
item, the non-Hausdorff double consists of two copies of \(X\) identified over
\(U\).  A principal bundle on this gluing groupoid is therefore exactly a pair of
principal bundles on the two copies together with a gluing isomorphism over
\(U\).  Choosing the trivial bundle on the plus sheet and the type-\(\lambda\)
modification on the minus sheet, the gluing is written over \(U\) as
\(q^{-\lambda}\).  Locally in the normal direction to \(Q\), replacing \(q\) by a
local equation \(z\) gives the loop \(z^{-\lambda}\in G(\C((z)))\).  Quotienting
by changes of formal trivialization gives the affine-Grassmannian class
\(z^{-\lambda}G(\C[[z]])\), and changing the maximal torus or diagonal form acts
by the Weyl group.  The associated-bundle statement follows by decomposing
\(V_\rho\) into \(T\)-weight spaces.  This is the usual principal-bundle
modification formalism, here applied to the Penrose--Sparling gluing groupoid.
\end{proof}

This is the point at which the NCFI motivation becomes natural.  In the original
NCFI setting, a singular leaf space is replaced by a foliation groupoid and a
convolution algebra.  In the present twistor setting, the non-Hausdorff twistor
space is replaced by the gluing groupoid \(\calG_Q\) and its algebra \(A_Q\).  The
sourced Ward bundle is then treated as a \(K\)-class on \(A_Q\), while twistor
cycles or relative source cycles define cyclic cocycles.  Thus the
Chern--Connes pairing is the operator-algebraic shadow of the Ward transform in
the presence of a source.

For the abelian Coulomb charge \(n\), the Ward object used below is the line
bundle \(\calC_n\) on the non-Hausdorff double.  In the conventions of this
article its restrictions to the two sheets are
\[
\calC_n|_{X_+}\cong\calO_X,
\qquad
\calC_n|_{X_-}\cong\calO_X(-2n),
\]
and the two line bundles are identified over \(X\setminus Q\) by a power of the
quadric equation.  This is the abelian instance of the sourced Ward construction
above.  The computation in Theorem~\ref{thm:charge} is then a Ward-type charge
extraction written as a Chern--Connes pairing.

The same viewpoint also explains the non-abelian extension in
Sections~\ref{sec:nonabelian-bundles}--\ref{sec:quadratic-pairing}.  A Cartan
cocharacter
\[
\lambda:\C^*\longrightarrow T\subset G
\]
is the local source datum.  In a representation \(\rho:G\to GL(V_\rho)\), the
associated Ward bundle decomposes into weight spaces, and each weight contributes
an abelian Penrose--Sparling line bundle.  The line-supported pairing sees only
the determinant, or abelianized, charge.  The quadric-supported pairing sees a
quadratic coweight moment.  The complete source type, however, is the
principal-bundle modification itself, equivalently the affine-Grassmannian
Schubert orbit attached to \(\lambda\).

\section{From the NCFI problem to the twistor source pairing}
\label{sec:ncfi-to-twistor}

We spell out the analogy with the NCFI because it fixes the logic of the article.
For a transversally oriented even-codimensional foliation, Connes' transverse
fundamental cyclic cocycle is the noncommutative substitute for integration over
the transverse quotient \cite{ConnesTF1986,Connes1994}.  The transverse
geometric module supplies a \(K\)-class.  Their Chern--Connes pairing is a
number.  The author's NCFI uses precisely this structure and was developed in
the context of sigma-model and noncommutative topological invariants
\cite{Zois2000,ZoisMTheory,ZoisInvariants}.

The Penrose--Sparling twistor space has no foliation \(\mathcal F\) in the
background, hence no literal transverse fundamental cyclic cocycle
\(\varphi_{\mathcal F}\) and no literal transverse geometric module
\([e_{\mathcal F}]\).  What it does have is the same formal data after replacing
``leaf space'' by ``gluing quotient''.  The non-Hausdorff quotient is presented
by an etale groupoid \(\calG_Q\).  Its smooth convolution algebra
\(A_Q^\infty\) supports cyclic cocycles obtained by integration over the sheets,
over the doubled quadric, or over relative cycles such as \(L_+-L_-\).  The
Penrose--Sparling source line bundle and its non-abelian analogues define
\(K_0(A_Q)\)-classes.  These are exactly the ingredients needed for
Chern--Connes pairings; this is also naturally interpreted in the language of classifying spaces and differentiable stacks for groupoids \cite{Moerdijk1995,MoerdijkMrcun2003}.

There are therefore two possible NCFI-inspired pairings.  The first is the
strict tangent-module analogue
\[
Z_{\rm strict}(X_Q)
=
\left\langle [X_Q]_{\rm cyc},
[T_{\mathbb R}\CP^3\otimes\C]\right\rangle,
\]
where \([X_Q]_{\rm cyc}\) denotes any top-dimensional cyclic fundamental class
obtained from the two sheets.  Theorem~\ref{thm:strict-vanishing} proves that
this number is zero.  The second is the source-adapted pairing
\[
\mathcal Q(\calC_n)
=
\frac12\left\langle\varphi_L^+-\varphi_L^-,[\calC_n]\right\rangle,
\]
which Theorem~\ref{thm:charge} proves to be \(n\).  The mathematical content of
this article is the contrast
\[
Z_{\rm strict}(X_Q)=0,
\qquad
\mathcal Q(\calC_n)=n.
\]

This contrast is the reason the example is useful.  It shows both the limitation
of the strict tangent-module NCFI philosophy and the flexibility of the broader
Chern--Connes pairing framework.  The tangent-module class sees transverse
geometry; the source class sees the jump in gluing data across the
non-Hausdorff divisor.  The latter is the relevant object for Coulomb charge.

\section{The Penrose--Sparling twistor double}

Let
\[
X=\PT\cong\CP^3
\]
and let \(Q\subset X\) be a smooth quadric.  In the Coulomb example \(Q\) is the quadric swept out by the twistor lines incident with the source world-line, following the Penrose--Sparling and Bailey descriptions of sourced fields \cite{PenroseSparling1979,Bailey1985,BaileySinger1989,HughstonMason1990}.  Write
\[
U=X\setminus Q.
\]
Let \(X_+\) and \(X_-\) be two copies of \(X\).  The non-Hausdorff twistor double is the quotient
\[
X_Q=(X_+\sqcup X_-)/\sim,
\]
where
\[
p_+\sim p_-\quad\text{for }p\in U,
\]
but no identification is imposed between \(q_+\) and \(q_-\) for \(q\in Q\).

\begin{proposition}\label{prop:nonhausdorff}
The space \(X_Q\) is a non-Hausdorff complex three-manifold.  The only non-Hausdorff phenomenon occurs along the two copies \(Q_+\) and \(Q_-\).
\end{proposition}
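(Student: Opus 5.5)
The plan is to verify three properties of \(X_Q\) in turn: it is locally a complex three-manifold, the quotient map is open, and Hausdorff separation fails exactly for the pairs \(\{q_+,q_-\}\) with \(q\in Q\).

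\textbf{Local charts.} Let \(\pi:X_+\sqcup X_-\to X_Q\) be the quotient map. The first step is to show that \(\pi\) is open. Take an open set \(V_+\subset X_+\). Its saturation is \(V_+\cup (V_+\cap U)_-\). This set is open, because \(U\) is open in \(X\). The same argument applies to open sets in \(X_-\). It follows that each restriction \(\pi|_{X_\pm}\) is an injective, continuous, open map, hence a homeomorphism onto the open set \(\pi(X_\pm)\). These two open sets cover \(X_Q\). The transition map between them is defined on \(\pi(U_+)=\pi(U_-)\) and is the identity of \(U\), which is holomorphic. Pulling back the holomorphic atlas of \(\CP^3\) along the two sheets therefore gives a complex atlas of dimension \(3\). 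Second countability holds because \(X_Q\) is covered by two second-countable open sets.

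\textbf{Points that can be separated.} Next, take two distinct points \(a,b\in X_Q\). If both lie in one sheet image \(\pi(X_\pm)\), they are separated there, since that image is homeomorphic to the Hausdorff space \(X\). The only remaining case is \(a=\pi(p_+)\) and \(b=\pi(p'_-)\) with \(a\neq b\). If \(p\neq p'\), choose disjoint neighbourhoods \(W\ni p\) and \(W'\ni p'\) in \(X\). Then \(\pi(W_+)\) and \(\pi(W'_-)\) are disjoint: any common point would have to come from \(U\) and lie in \(W\cap W'=\emptyset\). If \(p=p'\), then \(a\neq b\) forces \(p\in Q\). So every pair of distinct points outside this last situation is separated.

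\textbf{The doubled quadric is not separated.} The main step is to show that for \(q\in Q\), the points \(\pi(q_+)\) and \(\pi(q_-)\) cannot be separated. Any neighbourhoods of these points contain sets \(\pi(W_+)\) and \(\pi(W'_-)\) with \(W,W'\ni q\) open in \(X\). The set \(W\cap W'\) is an open neighbourhood of \(q\). Since \(Q\) is a proper hypersurface, it has empty interior, so \(U\) is dense in \(X\) and \(W\cap W'\cap U\neq\emptyset\). Any point of \(W\cap W'\cap U\) maps to a common point of the two neighbourhoods. This density is the only geometric input beyond the definition of the quotient. Hence \(X_Q\) is non-Hausdorff, and the non-Hausdorff pairs are exactly \(\pi(Q_+)\) and \(\pi(Q_-)\). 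Moreover, \(\pi(U)\) is an open Hausdorff subset isomorphic to \(U\).
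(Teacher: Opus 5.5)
Your proof is correct and follows the same route as the paper. Charts come from the two sheets and are glued by the identity on \(U\). The points \(q_+,q_-\) cannot be separated because any neighbourhoods of them share a point coming from \(U\).

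You are more thorough than the paper in three places, all of them genuine improvements:
\begin{enumerate}
\item You prove that the quotient map is open. This makes the sheet images open, so the charts are legitimate.
\item You use the density of \(U\) to obtain a single common point in \(W\cap W'\cap U\). The paper's ``arbitrarily close'' phrasing does not by itself produce one.
\item You check explicitly that every other pair of points is separated. This is what the clause ``the only non-Hausdorff phenomenon'' requires, and the paper's proof leaves it implicit.
\end{enumerate}
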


\begin{proof}
Locally away from \(Q\), the two copies have been identified, so the space is locally \(X\).  Near a point of \(Q_+\) or \(Q_-\), each sheet gives an ordinary complex coordinate chart.  Thus \(X_Q\) is locally a complex three-manifold.

Let \(q\in Q\).  We show that the two points \(q_+\) and \(q_-\) cannot be separated.  Any neighbourhood of \(q_+\) contains points \(p_+\) with \(p\in U\) arbitrarily close to \(q\), and any neighbourhood of \(q_-\) contains points \(p_-\) with \(p\in U\) arbitrarily close to \(q\).  But in the quotient \(p_+=p_-\) for \(p\in U\).  Therefore every pair of neighbourhoods of \(q_+\) and \(q_-\) intersects.  Hence \(X_Q\) is not Hausdorff.  This is the elementary topological mechanism behind the non-Hausdorff twistor spaces appearing in \cite{PenroseSparling1979,WoodhouseMason1988,HughstonMason1990}.
\end{proof}

\begin{remark}
The construction is not a deformation of the complex structure in the sense of the nonlinear graviton \cite{Penrose1976}; it is a gluing construction along an open complement.  It is therefore closer in spirit to the source and relative-cohomology constructions of \cite{Bailey1985,BaileySinger1989}.  It should also be distinguished from the quantized or noncommutative twistor-space gluing problems considered by Marcolli--Penrose \cite{MarcolliPenrose2020}.
\end{remark}

\section{The gluing groupoid and its algebra}

Set
\[
Y=X_+\sqcup X_-.
\]
Define an \'etale equivalence-relation groupoid
\[
\calG_Q\rightrightarrows Y
\]
as follows.  The arrows are the identity arrows on \(Y\), together with the two gluing arrows
\[
p_+\longrightarrow p_-,\qquad p_-\longrightarrow p_+,
\qquad p\in U.
\]
Equivalently,
\[
\calG_Q=Y\sqcup U_{+-}\sqcup U_{-+},
\]
where \(U_{+-}\cong U\) consists of arrows from \(X_+\) to \(X_-\), and \(U_{-+}\cong U\) consists of arrows from \(X_-\) to \(X_+\).  The range and source maps are the evident local homeomorphisms.  The orbit space is \(Y/\calG_Q=X_Q\).  This is the standard groupoid replacement of a quotient which is geometrically meaningful but topologically non-Hausdorff; compare the general groupoid treatments in \cite{Renault1980,MuhlyRenaultWilliams1987,Paterson1999,MoerdijkMrcun2003,Tu2004}.

\begin{definition}\label{def:algebra}
Let \(A_Q=C^*(\calG_Q)\) be the reduced groupoid \(C^*\)-algebra of \(\calG_Q\).  Since \(\calG_Q\) is an amenable finite equivalence-relation groupoid, the full and reduced completions agree.  The notation and completion conventions are those standard in groupoid \(C^*\)-algebra theory \cite{Renault1980,MuhlyRenaultWilliams1987,RaeburnWilliams1998,Williams2007}.
\end{definition}

\begin{proposition}\label{prop:algebra}
There is a natural identification
\[
A_Q\cong
\left\{
\begin{pmatrix}
f_+&g\\ h&f_-
\end{pmatrix}:
 f_\pm\in C(X),\quad g,h\in C_0(U)
\right\},
\]
with pointwise matrix multiplication over \(U\) and diagonal multiplication over \(Q\).  Moreover there is a short exact sequence
\[
0\longrightarrow C_0(U,M_2(\C))
\longrightarrow A_Q
\longrightarrow C(Q)\oplus C(Q)
\longrightarrow0.
\]
\end{proposition}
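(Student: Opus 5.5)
The plan is to compute the convolution algebra directly on compactly supported continuous functions on \(\calG_Q\), and then pass to the completion. Since \(\calG_Q=Y\sqcup U_{+-}\sqcup U_{-+}\) as a topological space, with \(Y=X_+\sqcup X_-\) compact and \(U_{\pm\mp}\cong U\) open, an element of \(C_c(\calG_Q)\) is a quadruple \((f_+,f_-,g,h)\) with \(f_\pm\in C(X)\) (functions on the unit space \(X_\pm\)) and \(g,h\in C_c(U)\) (functions on the arrow sets \(U_{-+}\) and \(U_{+-}\)). We arrange these as the matrix \(\begin{pmatrix} f_+&g\\ h&f_-\end{pmatrix}\). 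Because every arrow \(\gamma\) has at most two composable decompositions \(\gamma=\alpha\beta\), the convolution \((a*b)(\gamma)=\sum_{\alpha\beta=\gamma}a(\alpha)b(\beta)\) can be written out explicitly.

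\textbf{Step 1: convolution is matrix multiplication.} At a unit \(p_+\) with \(p\in U\), the decompositions are \(p_+p_+\) and \((p_-\to p_+)(p_+\to p_-)\), giving \(f_+f_+'+gh'\). At a unit \(q_+\) with \(q\in Q\), only the identity decomposition exists, giving \(f_+f_+'\). The off-diagonal entries are handled in the same way. Thus the product is the pointwise \(2\times2\) matrix product over \(U\), and over \(Q\) only the diagonal survives, since \(g,h\) vanish there. The involution \(a^*(\gamma)=\overline{a(\gamma^{-1})}\) is the conjugate transpose. This identifies \(C_c(\calG_Q)\) with the \(*\)-algebra \(\mathcal B\) of such matrices having \(g,h\in C_c(U)\).

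\textbf{Step 2: identify the \(C^*\)-norm.} This is the main obstacle. The regular representations are as follows. At \(p_\pm\) with \(p\in U\), the source fibre has two points, and the representation is evaluation \(a\mapsto a(p)\in M_2(\C)\). At \(q_\pm\) with \(q\in Q\), the fibre is a single point, and the representation is \(a\mapsto f_\pm(q)\). Hence the reduced norm is
\[
\|a\|=\max\Big(\sup_{p\in U}\|a(p)\|_{M_2},\ \sup_{q\in Q}|f_+(q)|,\ \sup_{q\in Q}|f_-(q)|\Big).
\]
By continuity of \(f_\pm\), this equals \(\sup_{x\in X}\|a(x)\|_{M_2}\), where \(a(q)=\diag(f_+(q),f_-(q))\). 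It remains to check that the completion of \(\mathcal B\) in this sup norm is exactly the stated algebra, with \(g,h\in C_0(U)\). This should follow because the stated algebra is closed in \(C(X,M_2(\C))\): a uniform limit of off-diagonal entries vanishing on \(Q\) still vanishes on \(Q\), and \(C_c(U)\) is dense in \(C_0(U)\). Full equals reduced by Definition~\ref{def:algebra}, so no further completion issue arises.

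\textbf{Step 3: the exact sequence.} Define \(\pi:A_Q\to C(Q)\oplus C(Q)\) by \(\pi(a)=(f_+|_Q,f_-|_Q)\). This is a \(*\)-homomorphism, because over \(Q\) multiplication is diagonal; on \(Q\) we have \(g=h=0\), so \((f_+f_+'+gh')|_Q=f_+f_+'|_Q\). It is surjective by Tietze extension from \(Q\) to \(X\). Its kernel consists of the matrices all of whose entries lie in \(C_0(U)\), which is \(C_0(U,M_2(\C))\) embedded as an ideal. Exactness then follows directly.
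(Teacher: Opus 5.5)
Your proposal is correct and follows the same route as the paper: functions on the units and on the two off-diagonal arrow sets become the matrix entries, convolution is orbitwise matrix multiplication, and restricting the diagonal entries to \(Q\) gives the quotient map with kernel \(C_0(U,M_2(\C))\). Your version is more careful than the paper's, which passes from \(C_c\) to \(C_0\) without comment. You supply the steps the paper leaves implicit: the reduced norm computed from the regular representations at points of \(U\) and of \(Q\), the closure argument inside \(C(X,M_2(\C))\), and Tietze extension for surjectivity.
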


\begin{proof}
A function on the identity arrows gives a pair \((f_+,f_-)\in C(X)\oplus C(X)\).  A function on the off-diagonal arrows is a pair \((g,h)\in C_0(U)\oplus C_0(U)\), because the off-diagonal arrows exist only over \(U\) and must vanish at the missing boundary \(Q\).  Convolution is exactly matrix multiplication over each orbit.  Over \(U\), the orbit has two points, so the fibre algebra is \(M_2(\C)\).  Over \(Q\), the two sheets are not identified, so only the diagonal entries remain.  Restriction of \((f_+,f_-)\) to \(Q\) gives the quotient map
\[
A_Q\longrightarrow C(Q)\oplus C(Q).
\]
Its kernel consists precisely of matrices all of whose entries vanish at \(Q\), namely \(C_0(U,M_2(\C))\).  This is the elementary version of the exact sequences for groupoid algebras used throughout noncommutative geometry \cite{Renault1980,Blackadar1998,Davidson1996,Williams2007}.
\end{proof}

For cyclic calculations we use the smooth dense subalgebra
\[
A_Q^\infty=
\left\{
\begin{pmatrix}
f_+&g\\ h&f_-
\end{pmatrix}:
 f_\pm\in C^\infty(X),\quad g,h\in C_c^\infty(U)
\right\}.
\]
This is sufficient for the cocycles below, which either integrate over \(X\) or factor through the restriction to \(Q_+\sqcup Q_-\).  The general philosophy is the same as in Connes' use of smooth dense subalgebras and cyclic cocycles for foliations and singular quotients \cite{Connes1985,Connes1994,ConnesTF1986,Crainic1999,CrainicMoerdijk2000}.

\section{The strict tangent-module pairing vanishes}

This section records what happens if one imitates the strict even-dimensional tangent-module pairing.  The natural tangent class is
\[
[T_{\mathbb R}X\otimes\C].
\]
Since \(X=\CP^3\) has real dimension six, the top term in the Chern character is the degree-six term, equivalently the \(\ch_3\)-term.  The normalizations used here are the standard Chern-Weil/Chern-character normalizations in \cite{AtiyahKTheory1967,BottTu1982,MilnorStasheff1974,Fulton1998}.

\begin{lemma}\label{lem:ch-cancel}
Let \(E\to X\) be a complex vector bundle.  Then
\[
\ch_k(\overline E)=(-1)^k\ch_k(E).
\]
Consequently,
\[
\ch_3(T^{0,1}X)=-\ch_3(T^{1,0}X).
\]
\end{lemma}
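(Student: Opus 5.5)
The argument has two parts: first the general conjugation identity for Chern characters, then its specialization to the tangent bundle of \(X=\CP^3\).

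For the first part we work with Chern--Weil representatives. Choose a Hermitian metric \(h\) on \(E\) with a compatible connection \(\nabla\), and let \(F\) be its curvature, a \(2\)-form with values in skew-Hermitian endomorphisms. Then
\[
\ch_k(E)=\frac{1}{k!}\,\mathrm{tr}\Bigl(\frac{iF}{2\pi}\Bigr)^k .
\]
The metric gives a complex-antilinear isomorphism \(\overline E\cong E^*\). Under it, \(\nabla\) induces a connection on \(\overline E\) whose curvature, in a local unitary frame, is the entrywise conjugate \(\overline F\). Since \(F\) is skew-Hermitian, \(\overline F=-F^{T}\). Substituting,
\[
\mathrm{tr}\bigl(i\overline F\bigr)^k=\mathrm{tr}\bigl(-iF^{T}\bigr)^k=(-1)^k\,\mathrm{tr}\bigl((iF)^T\bigr)^k .
\]
The entries of \(F\) are even forms, so they commute, and transposition reverses products without any sign. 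Hence \(\mathrm{tr}\bigl((iF)^T\bigr)^k=\mathrm{tr}(iF)^k\), and therefore \(\ch_k(\overline E)=(-1)^k\ch_k(E)\) at the level of forms, hence in de Rham cohomology.

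As a cross-check, the same identity follows from the splitting principle. The relation \(c_1(\overline{\mathcal L})=-c_1(\mathcal L)\) for line bundles follows from \(\overline{\mathcal L}\cong\mathcal L^*\). Writing \(\ch(E)=\sum_j e^{x_j}\) in terms of Chern roots gives \(\ch(\overline E)=\sum_j e^{-x_j}\), and comparing degree-\(k\) parts yields the factor \((-1)^k\).

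For the consequence, note that complex conjugation on \(T_{\mathbb R}X\otimes\C\) exchanges the \(\pm i\) eigenbundles of the almost complex structure \(J\). This gives a canonical isomorphism of complex vector bundles \(T^{0,1}X\cong\overline{T^{1,0}X}\). Applying the first part with \(E=T^{1,0}X\) and \(k=3\) gives \(\ch_3(T^{0,1}X)=-\ch_3(T^{1,0}X)\). Combined with the splitting \(T_{\mathbb R}X\otimes\C=T^{1,0}X\oplus T^{0,1}X\) and additivity of \(\ch\), this is the cancellation quoted in the abstract.

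The only step that needs care is the sign bookkeeping in the first part. It requires two conventions to be fixed consistently: that conjugating the matrix of forms acts as \(-F^T\) rather than \(F^T\), and that the normalization \(i/2\pi\) makes the Chern character real. Of the two routes, the splitting-principle one is the cleaner to present, because it reduces all of the sign bookkeeping to the single line-bundle identity \(c_1(\overline{\mathcal L})=-c_1(\mathcal L)\).
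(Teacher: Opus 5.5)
Your proof is correct. Your ``cross-check'' paragraph is essentially the paper's proof: the paper argues only via the splitting principle, noting that the Chern roots of $\overline E$ are $-x_1,\dots,-x_r$ and reading off $\ch_k(\overline E)=\frac{1}{k!}\sum_j(-x_j)^k=(-1)^k\ch_k(E)$.

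Your lead argument is a genuinely different route. In a local unitary frame, the curvature of $\nabla$ viewed as a connection on $\overline E$ is the entrywise conjugate $\overline F=-F^{T}$. Transpose-invariance of traces of powers of matrices with commuting even-form entries then gives the identity at the level of Chern character forms, not only in cohomology. This route needs no flag bundles and no injectivity of pullback. Its cost is exactly the sign bookkeeping you point out, whereas the splitting-principle route reduces everything to $c_1(\overline{\mathcal L})=-c_1(\mathcal L)$.

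One small wording slip: the metric gives a complex-\emph{linear} isomorphism $\overline E\cong E^*$, equivalently an antilinear isomorphism $E\to E^*$. You do not actually need it, though. Since $\nabla$ commutes with $J$, it commutes with $-J$, so $\nabla$ is itself a connection on $\overline E$.

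You also justify $T^{0,1}X\cong\overline{T^{1,0}X}$ via complex conjugation on $T_{\mathbb R}X\otimes\C$. The paper uses this step without comment in its ``Consequently''.
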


\begin{proof}
After applying the splitting principle, write the Chern roots of \(E\) as \(x_1,\ldots,x_r\).  The Chern roots of \(\overline E\) are \(-x_1,\ldots,-x_r\).  Therefore
\[
\ch_k(\overline E)=\frac1{k!}\sum_j(-x_j)^k=(-1)^k\ch_k(E).
\]
This is the standard splitting-principle computation of the Chern character \cite{BottTu1982,MilnorStasheff1974,Fulton1998}.
\end{proof}

\begin{theorem}\label{thm:strict-vanishing}
The strict tangent-module analogue of the even NCFI for the Penrose--Sparling twistor double vanishes:
\[
Z_{\mathrm{strict}}(X_Q)=0.
\]
\end{theorem}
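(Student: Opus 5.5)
The plan is to reduce the groupoid-level pairing to ordinary integrals of Chern characters over the two sheets, and then to apply Lemma~\ref{lem:ch-cancel}.

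\emph{Step 1: make the pairing explicit.} A top-dimensional cyclic fundamental class $[X_Q]_{\rm cyc}$ built from the two sheets is a linear combination $a_+\tau_+ + a_-\tau_-$. Here $\tau_\pm$ is the cyclic $6$-cocycle on $A_Q^\infty$
\[
\tau_\pm(b_0,\ldots,b_6)=\int_{X}\,(b_0)_{\pm\pm}\,d(b_1)_{\pm\pm}\wedge\cdots\wedge d(b_6)_{\pm\pm}.
\]
More invariantly, one may take the trace-composed version $\int_X \operatorname{tr}(b_0\,db_1\cdots db_6)$, and use that the off-diagonal entries are compactly supported in $U$. I will note that the choice among these cocycles, and the choice of the coefficients $a_\pm$, will not matter, since the argument only uses that each cocycle is integration of a Chern form over a copy of $X$.

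\emph{Step 2: identify the $K$-class.} On the double, $T_{\mathbb R}\CP^3\otimes\C$ is the pullback to both sheets of the same bundle, glued by the identity over $U$. It is therefore represented by a projection $e$ in $M_N(A_Q^\infty)$ whose diagonal blocks are the same projection $e_X\in M_N(C^\infty(X))$ representing $T_{\mathbb R}X\otimes\C$. Connes' theorem identifies the pairing of $\tau_\pm$ with $[e]$ as a constant multiple of $\int_X \ch_3(T_{\mathbb R}X\otimes\C)$, via the Chern form of the Grassmann connection $e\,d\,e$. The same identification applies to the trace-composed cocycle, because the off-diagonal contributions vanish when $e$ is block diagonal.

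\emph{Step 3: cancel.} We have $T_{\mathbb R}X\otimes\C\cong T^{1,0}X\oplus T^{0,1}X$ and $T^{0,1}X\cong\overline{T^{1,0}X}$. By additivity of $\ch$ and Lemma~\ref{lem:ch-cancel},
\[
\ch_3(T_{\mathbb R}X\otimes\C)=\ch_3(T^{1,0}X)-\ch_3(T^{1,0}X)=0.
\]
This holds already in de Rham cohomology. Moreover, choosing the connection induced from a connection on $T^{1,0}X$ and its conjugate makes the Chern forms cancel pointwise. Hence each sheet contributes zero, and $Z_{\rm strict}(X_Q)=0$.

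The main obstacle is Step 2: justifying that the groupoid-level pairing reduces to sheetwise Chern-character integrals, in particular that the off-diagonal entries contribute nothing. I would handle this by choosing the block-diagonal representative of the projection. Cyclic pairings depend only on the $K$-class, and the block-diagonal representative is a legitimate element of $M_N(A_Q^\infty)$ because both diagonal blocks agree on $Q$ trivially. With this representative every off-diagonal entry of $e$ is zero, so those terms drop out of the computation.
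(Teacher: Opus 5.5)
Your argument is correct and is essentially the paper's proof: the splitting $T_{\mathbb R}X\otimes\C\cong T^{1,0}X\oplus T^{0,1}X$ together with Lemma~\ref{lem:ch-cancel} gives $\ch_3(T_{\mathbb R}X\otimes\C)=0$, so every sheetwise top-degree cocycle pairs to zero, and your Steps~1--2 just make explicit the reduction to $\int_X\ch_3$ that the paper leaves implicit. One correction to Step~1: the corner functionals $\tau_\pm$ are not cocycles on $A_Q^\infty$, because $(bc)_{++}=b_{++}c_{++}+b_{+-}c_{-+}$ picks up off-diagonal terms over $U$, so the legitimate choice is your trace-composed cocycle (the restriction of the fundamental cocycle of $C^\infty(X,M_2(\C))$, which agrees with $\tau_++\tau_-$ on block-diagonal arguments), and for it your block-diagonal computation goes through unchanged (block-diagonal matrices lie in $M_N(A_Q^\infty)$ with no matching condition on $Q$ needed).
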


\begin{proof}
The complexification of the real tangent bundle satisfies
\[
T_{\mathbb R}X\otimes\C\cong T^{1,0}X\oplus T^{0,1}X.
\]
Hence
\[
\ch_3(T_{\mathbb R}X\otimes\C)
=
\ch_3(T^{1,0}X)+\ch_3(T^{0,1}X)
=0
\]
by Lemma~\ref{lem:ch-cancel}.  Any top-dimensional fundamental cyclic cocycle obtained by integrating over either sheet, or over a linear combination of the two sheets, therefore pairs trivially with this class.  This is a Chern--Connes pairing statement in the same formal spirit as the transverse fundamental class pairings of Connes \cite{ConnesTF1986,Connes1994} and the NCFI pairing philosophy of \cite{Zois2000,ZoisInvariants}, but it is not literally the original foliation invariant.
\end{proof}

\begin{remark}\label{rem:holomorphic-tangent}
The vanishing above is specific to the real tangent bundle complexified.  If one used the holomorphic tangent bundle alone, the calculation would be different.  The Euler sequence on projective space gives
\[
c(T^{1,0}\CP^3)=(1+H)^4,
\]
so
\[
c_1=4H,
\qquad
c_2=6H^2,
\qquad
c_3=4H^3.
\]
Therefore
\[
\ch_3(T^{1,0}\CP^3)
=
\frac16(c_1^3-3c_1c_2+3c_3)
=
\frac23 H^3,
\]
and
\[
\int_{\CP^3}\ch_3(T^{1,0}\CP^3)=\frac23.
\]
This number is not the strict tangent-module pairing; it is included only to show where the cancellation enters.  The projective-space and holomorphic-bundle conventions are standard \cite{GriffithsHarris1978,Hartshorne1977,Fulton1998}.
\end{remark}

\section{The Coulomb line bundle as a groupoid \texorpdfstring{\(K\)}{K}-class}

Let \(q\in H^0(X,\calO_X(2))\) be a defining section for the quadric \(Q\).  On \(U=X\setminus Q\), the section \(q\) is nowhere zero.  The line-bundle gluing below is the groupoid-algebra form of the Penrose--Sparling Coulomb data \cite{PenroseSparling1979,HughstonMason1990}.  It is also the place where the relative-cohomology description of sourced fields enters \cite{Bailey1985,BaileySinger1989}.

\begin{definition}\label{def:coulomb-bundle}
For \(n\in\Z\), define the Coulomb line bundle \(\calC_n\) on the non-Hausdorff twistor double by the following equivariant line-bundle data on \(Y=X_+\sqcup X_-\):
\[
\calC_n|_{X_+}=\calO_X,
\qquad
\calC_n|_{X_-}=\calO_X(-2n),
\]
with gluing over \(U\) given by the nowhere-zero trivialization
\[
q^{-n}:\calO_X|_U\longrightarrow\calO_X(-2n)|_U.
\]
The associated groupoid-equivariant vector bundle defines a class
\[
[\calC_n]\in K_0(A_Q).
\]
\end{definition}

\begin{remark}\label{rem:Kclass}
For a proper or \'etale groupoid, equivariant vector bundles provide natural classes in the \(K\)-theory of the groupoid algebra, subject to the usual completion conventions.  This is the same mechanism behind many Morita-invariant constructions for groupoid \(C^*\)-algebras and differentiable stacks \cite{MuhlyRenaultWilliams1987,RaeburnWilliams1998,Tu2004,MoerdijkMrcun2003}.  In the present elementary example the bundle is explicit: it is a pair of line bundles on the two sheets together with a gluing isomorphism over \(U\).
\end{remark}

\begin{remark}\label{rem:sign}
The sign convention is fixed so that the restriction to the minus sheet has degree \(-2n\) on a projective line in \(Q\).  Reversing the sheet convention or using \(\calO(2n)\) instead changes the sign of the final source pairing.  The normalization is chosen so that the final number is the physical charge \(n\), matching the charge convention in the twistor-source literature \cite{PenroseSparling1979,Bailey1985,BaileySinger1989}.
\end{remark}

\section{The relative/source cyclic cocycle}

The smooth quadric satisfies
\[
Q\cong\CP^1\times\CP^1.
\]
Let \(L\subset Q\) be a line in one ruling.  Under the Segre embedding \(Q\subset\CP^3\), the hyperplane class restricts to a class of degree one on \(L\):
\[
\int_L H=1.
\]
Let
\[
\rho_\pm:A_Q^\infty\longrightarrow C^\infty(Q)
\]
be the two quotient maps obtained by restricting the diagonal entries \(f_\pm\) to \(Q\).

\begin{definition}\label{def:line-cocycle}
Define cyclic two-cocycles on \(A_Q^\infty\) by
\[
\varphi_L^\pm(a_0,a_1,a_2)
=
\frac{1}{2\pi i}\int_L
\rho_\pm(a_0)\,d\rho_\pm(a_1)\wedge d\rho_\pm(a_2).
\]
The relative/source cocycle is
\[
\varphi_L^{\mathrm{rel}}=\varphi_L^+-\varphi_L^-.
\]
\end{definition}

The cocycle above is the two-dimensional analogue of the integration cyclic cocycles used in the Chern--Connes pairing.  The general cyclic-cohomological framework is due to Connes and its bivariant/excision refinements are due to Cuntz--Quillen and related developments \cite{Connes1985,Connes1994,Cuntz1997,CuntzQuillen1995,CuntzQuillen1997,Loday1998,Khalkhali2009}.  In the present example no abstract machinery is needed to compute the number: the cocycle factors through the two diagonal quotient maps to \(C^\infty(Q)\), and then through integration over the curve \(L\).

\begin{definition}\label{def:source-pairing}
The normalized source pairing of \(\calC_n\) is
\[
\mathcal Q(\calC_n)
:=
\frac12\left\langle\varphi_L^{\mathrm{rel}},[\calC_n]\right\rangle.
\]
\end{definition}

\begin{theorem}\label{thm:charge}
With the conventions above,
\[
\mathcal Q(\calC_n)=n.
\]
\end{theorem}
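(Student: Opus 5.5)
The plan is to reduce the pairing to two ordinary Chern-number computations on the curve \(L\), one for each sheet. The key structural fact is that \(\varphi_L^\pm\) factors through the diagonal quotient map \(\rho_\pm:A_Q^\infty\to C^\infty(Q)\). Hence, for a projection \(e\in M_N(A_Q^\infty)\) representing \([\calC_n]\), the pairing \(\langle\varphi_L^\pm,[e]\rangle\) equals the pairing of the classical integration cocycle on \(L\) with the projection \(\rho_\pm(e)\in M_N(C^\infty(Q))\). Here we extend \(\varphi_L^\pm\) to matrices by composing with the trace in the usual way.

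\textbf{Step 1: represent \([\calC_n]\) by a projection.} Following Remark~\ref{rem:Kclass}, I would realize the equivariant line bundle of Definition~\ref{def:coulomb-bundle} as a finitely generated projective module over \(A_Q\). Concretely, I would choose smooth sections spanning \(\calO_X\) on \(X_+\) and \(\calO_X(-2n)\) on \(X_-\), with the cross terms given by \(q^{-n}\) times cut-offs supported in \(U\). Averaging over the two-element orbits then yields an idempotent \(e\in M_N(A_Q^\infty)\) whose module is the space of \(\calG_Q\)-equivariant sections.

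The point to verify is that the diagonal blocks restrict on \(Q\) as expected. The off-diagonal entries lie in \(C_c^\infty(U)\), so they vanish near \(Q\) and contribute nothing to \(\rho_\pm\). Therefore \(\rho_+(e)\) is a projection whose image bundle is \(\calO_X|_Q\), and \(\rho_-(e)\) is one whose image bundle is \(\calO_X(-2n)|_Q\). Equivalently, I would use the exact sequence of Proposition~\ref{prop:algebra}: the quotient map \(K_0(A_Q)\to K_0(C(Q))\oplus K_0(C(Q))\) sends \([\calC_n]\) to \(\bigl([\calO_Q],[\calO_Q(-2n)]\bigr)\). I expect this step to be the main obstacle. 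An honest projection must be built whose diagonal corner functions are genuinely in \(C^\infty(X)\), even though the gluing \(q^{-n}\) blows up at \(Q\), and one must check that the pairing depends only on the \(K_0\)-class. The latter follows because \(\varphi_L^\pm\) are cyclic cocycles and the pairing descends to \(K_0\). I would first try a partition of unity \(\{\chi,1-\chi\}\) with \(\chi\) supported in \(U\), so that \(q^{\pm n}\) appears only multiplied by \(\chi\).

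\textbf{Step 2: the Chern--Weil identification.} For a projection \(p\) on a closed curve \(L\), the standard identity is
\[
\frac{1}{2\pi i}\int_L \operatorname{tr}(p\,dp\wedge dp)=\pm\int_L c_1(\operatorname{im}p).
\]
This holds because \(p\,dp\,dp\) is the curvature of the Grassmann connection. Applied to the two restrictions, and using \(\int_L H=1\) so that \(\deg\calO(k)|_L=k\), it gives
\[
\langle\varphi_L^+,[\calC_n]\rangle=\pm 0,\qquad \langle\varphi_L^-,[\calC_n]\rangle=\pm(-2n).
\]

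\textbf{Step 3: combine.} Subtracting gives
\[
\langle\varphi_L^{\rm rel},[\calC_n]\rangle=\pm 2n,
\]
and the factor \(\tfrac12\) yields \(\pm n\). The remaining sign is fixed by the orientation of \(L\) and the normalization chosen for the Chern character, as in Remark~\ref{rem:sign}. I would carry this sign through the curvature identity carefully; with the convention that the Grassmann-connection pairing equals \(-\deg\), the result is \(\mathcal Q(\calC_n)=n\).
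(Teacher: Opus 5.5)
Your route is the paper's: restrict to $L_\pm$, read off degrees $0$ and $-2n$, subtract and halve. The paper's proof does this by setting $\langle\varphi_L^\pm,[\calC_n]\rangle=\int_{L_\pm}c_1(\calC_n)$ directly.

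\textbf{The genuine error is in your last step.} With the convention you adopt (pairing $=-\deg$) you get $\langle\varphi_L^+,[\calC_n]\rangle=0$ and $\langle\varphi_L^-,[\calC_n]\rangle=+2n$. Hence $\langle\varphi_L^{\rm rel},[\calC_n]\rangle=-2n$ and $\mathcal Q(\calC_n)=-n$, not $n$; only the $+\deg$ convention gives $n$. This matters because $-\deg$ is exactly what the literal cocycle produces. The Grassmann connection has curvature $F=\operatorname{tr}(p\,dp\wedge dp)$ and $c_1=\frac{i}{2\pi}F$, so $\frac{1}{2\pi i}\int_L\operatorname{tr}(p\,dp\wedge dp)=-\int_L c_1$ for the complex orientation of $L$. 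As a check, for the Bott projector onto $\calO(-1)$ on $\CP^1$ the left side is $+1$. So you cannot leave a $\pm$ and then declare the answer. To land on $n$ you must explicitly impose the paper's normalization $\langle\varphi_L^\pm,[E]\rangle=\int_{L_\pm}c_1(E)$, for instance by orienting $L$ against its complex orientation or building the sign into the pairing, and say that this is a choice. With the literal formula and the complex orientation, your argument proves $-n$.

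\textbf{Your Step 1 construction also cannot work as described, though the computation does not need it.} A projection $e\in M_N(A_Q^\infty)$ is a continuous projection-valued map $X\to M_{2N}(\C)$ that is block-diagonal on $Q$. Since $X$ is connected, its rank on $U$ equals $\operatorname{rank}\rho_+(e)+\operatorname{rank}\rho_-(e)=2$. Over $U$ its module is therefore two copies of the simple $M_2(\C)$-module. The glued line bundle, by contrast, has fibre $E_{p_+}\oplus E_{p_-}$ there, which is one copy. Your cut-off terms $\chi q^{\pm n}$ would make the rank jump, so no idempotent has the module of sections of $\calC_n$ as its image.

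What the pairing actually needs, and what the paper tacitly uses, is only a class whose image under $K_0(A_Q)\to K_0(C(Q))\oplus K_0(C(Q))$ is $([\calO_Q],[\calO_Q(-2n)])$. One such class is the image of $([\calO_X],[\calO_X(-2n)])$ under the diagonal inclusion $C^\infty(X)\oplus C^\infty(X)\hookrightarrow A_Q^\infty$. So rely on your ``equivalently'' route through the exact sequence. The gluing $q^{-n}$ never enters the pairing.
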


\begin{proof}
The restriction of \(\calC_n\) to \(L_+\) is trivial:
\[
\calC_n|_{L_+}\cong\calO_L.
\]
Hence
\[
\int_{L_+}c_1(\calC_n)=0.
\]
On the minus sheet,
\[
\calC_n|_{L_-}\cong\calO_L(-2n),
\]
so
\[
\int_{L_-}c_1(\calC_n)=-2n.
\]
The cyclic pairing with \(\varphi_L^+-\varphi_L^-\) gives
\[
\left\langle\varphi_L^+-\varphi_L^-,[\calC_n]\right\rangle
=
0-(-2n)=2n.
\]
Therefore
\[
\mathcal Q(\calC_n)=\frac12(2n)=n.
\]
This is the elementary Chern-class computation behind the source pairing; the use of a cyclic cocycle simply packages it in the same Connes-pairing language used for noncommutative quotients and foliation algebras \cite{ConnesTF1986,Connes1994,Zois2000}.
\end{proof}

\section{Principal \texorpdfstring{\(G\)}{G}-bundles and affine-Grassmannian source orbits}
\label{sec:nonabelian-bundles}
\label{sec:nonabelian-coweights}
\label{sec:principal-g-bundles}

The abelian Coulomb bundle is the rank-one instance of a principal-bundle
modification.  In non-abelian gauge theory, a dyonic or magnetic line defect is
not specified by an integer alone.  After choosing a maximal torus, its magnetic
part is encoded by a cocharacter, or coweight, modulo Weyl group.  This is the
Goddard--Nuyts--Olive viewpoint on magnetic charge, and it is also the language
of Wilson--'t Hooft operators, affine-Grassmannian modifications and geometric
Langlands theory \cite{GoddardNuytsOlive1977,KapustinWitten2007,
MirkovicVilonen2007,Zhu2016}.  In twistorial line-defect work, the same
structure appears when the abelian Coulomb bundle is embedded in a non-abelian
gauge group through a cocharacter of a maximal torus
\cite{GarnerPaquette2025,AdamoBognaMasonSharma2025}.

Let \(G\) be a connected complex reductive group, let
\[
T\subset G
\]
be a maximal torus, and let
\[
\lambda:\C^*\longrightarrow T
\]
be a cocharacter.  Equivalently,
\[
\lambda\in X_*(T)=\Hom(\C^*,T).
\]
Let \(D=Q\) be the source quadric and set
\[
\calL_Q:=\calO_X(-Q)\cong\calO_X(-2).
\]
Write \(\calL_Q^\times\) for the associated principal \(\C^*\)-bundle.  The
cocharacter \(\lambda\) defines a principal \(G\)-bundle on the minus sheet by
extension of structure group:
\[
P_{\lambda,-}:=\calL_Q^\times\times_{\C^*,\lambda}G,
\]
where \(t\in\C^*\) acts on \(G\) by left multiplication by \(\lambda(t)\).  We use the associated-bundle convention for which a \(\C^*\)-weight \(k\) produces the line factor \(\calL_Q^k\).  On
the plus sheet we take the trivial principal bundle
\[
P_{\lambda,+}:=X_+\times G.
\]
The defining section \(q\in H^0(X,\calO_X(Q))\) is nonzero on \(U=X\setminus Q\),
so its inverse gives a trivialization of \(\calL_Q\) over \(U\).  With this
trivialization, the gluing of the two principal bundles over \(U\) is written as
\[
q^{-\lambda}:U\longrightarrow T\subset G.
\]
Thus the pair \((P_{\lambda,+},P_{\lambda,-})\), together with this gluing over
\(U\), defines a principal \(G\)-bundle
\[
\mathcal P_\lambda
\]
on the Penrose--Sparling gluing groupoid \(\calG_Q\).  Equivalently, it is a
principal \(G\)-bundle on the non-Hausdorff twistor double in the groupoid or
stack sense.

\begin{definition}\label{def:principal-source-bundle}
The principal \(G\)-bundle \(\mathcal P_\lambda\) constructed above is called the
sourced Ward principal bundle of type \(\lambda\).  Its gauge-invariant magnetic
source type is the Weyl orbit of \(\lambda\), or, after choosing a dominant
representative, the corresponding affine-Grassmannian Schubert orbit.
\end{definition}

\begin{proposition}\label{prop:affine-grassmannian-orbit}
Let \(x\in Q\) and choose a formal coordinate \(z\) transverse to \(Q\) near
\(x\), so that \(Q\) is locally given by \(z=0\).  The formal normal type of
\(\mathcal P_\lambda\) at \(x\) is the point
\[
z^{-\lambda}G(\C[[z]])\in
\Gr_G:=G(\C((z)))/G(\C[[z]]).
\]
Its gauge-invariant type is the Schubert orbit
\[
\Gr_G^\lambda
=
G(\C[[z]])z^{-\lambda}G(\C[[z]])/G(\C[[z]]),
\]
or equivalently the Weyl orbit of \(\lambda\).
\end{proposition}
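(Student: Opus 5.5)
The proof works formally near a point \(x\in Q\). The strategy is to trivialize the two sheets of \(\mathcal P_\lambda\) compatibly with their holomorphic structure over a formal neighbourhood, and then read off the gluing loop. Choose formal coordinates \((z,w_1,w_2)\) at \(x\) with \(Q=\{z=0\}\). Since \(\calL_Q=\calO_X(-Q)\), the defining section \(q\) can be written near \(x\) as \(q=u\,z\), where \(u\) is a formal unit in a local frame of \(\calO_X(Q)\). The plus sheet \(P_{\lambda,+}\) is trivial, with its canonical section. For the minus sheet, the local frame \(z\) of the ideal sheaf \(\calO_X(-Q)\) trivializes \(\calL_Q\), hence trivializes \(\calL_Q^\times\times_{\C^*,\lambda}G\). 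In these two formal trivializations over the punctured formal disc in the normal direction, the gluing \(q^{-\lambda}\) becomes \((uz)^{-\lambda}=u^{-\lambda}z^{-\lambda}\). Because \(T\) is commutative, \(u^{-\lambda}\) is a well-defined element of \(T(\C[[z]])\subset G(\C[[z]])\); the parameters \(w_i\) just ride along, since \(u\) is a unit in \(\C[[z,w]]\).

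Next, the class in \(\Gr_G\) of a modification depends only on the gluing loop modulo changes of trivialization. Changing the trivialization of the minus (regular) sheet multiplies the loop on the right by \(G(\C[[z]])\). Changing the trivialization of the plus sheet multiplies it on the left by \(G(\C[[z]])\). The factor \(u^{-\lambda}\) commutes with \(z^{-\lambda}\) and lies in \(G(\C[[z]])\), so it is absorbed into the right coset. This gives the class \(z^{-\lambda}G(\C[[z]])\). The same argument shows that another choice of transverse coordinate \(z'=vz\), with \(v\) a unit, gives the same class. That is the coordinate-independence part of the statement. This is the Beauville--Laszlo description of modifications along a divisor \cite{BeauvilleLaszlo1995}, which I will cite rather than reprove.

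For the gauge-invariant type, the left action of \(G(\C[[z]])\), which re-trivializes the plus sheet, moves the point along its orbit. That orbit is by definition \(G(\C[[z]])z^{-\lambda}G(\C[[z]])/G(\C[[z]])=\Gr_G^\lambda\). The identification of orbits with Weyl orbits is the Cartan decomposition
\[
G(\C((z)))=\bigsqcup_{\mu\in X_*(T)^+}G(\C[[z]])\,z^{\mu}\,G(\C[[z]]).
\]
Here \(z^{-\lambda}\) and \(z^{w\lambda}\), \(w\in W\), lie in the same double coset because Weyl representatives \(\dot w\in N_G(T)\subset G(\C[[z]])\) satisfy \(\dot w z^{\mu}\dot w^{-1}=z^{w\mu}\). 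The double coset is therefore labelled by the dominant element of \(W\cdot(-\lambda)\), which equals \(W\cdot\lambda^*\) with \(\lambda^*=-w_0\lambda\).

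The main obstacle is exactly this sign and convention issue. The orbit of \(z^{-\lambda}\) is indexed by the dominant representative of \(-\lambda\), not of \(\lambda\). Saying ``equivalently the Weyl orbit of \(\lambda\)'' is correct only after fixing a convention: either that the label is read as the Weyl orbit of \(-\lambda\), so that \(\lambda\mapsto-\lambda\) is a fixed bijection of orbit sets, or that we pass to \(z^{\lambda}\) by inverting the gluing direction, which corresponds to swapping the roles of the sheets. I would state this convention explicitly, in parallel with Remark~\ref{rem:sign}, so that the bijection between Schubert orbits and Weyl orbits of \(\lambda\) is canonical. The Cartan decomposition itself is standard and I will cite it \cite{Zhu2016}.
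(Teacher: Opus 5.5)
Your argument is correct and follows the same route as the paper's proof. Formal re-trivializations act on the local loop $z^{-\lambda}$ by $G(\C[[z]])$, two-sided gauge freedom gives the double coset, and the standard Cartan/Schubert classification identifies that coset. You also supply steps the paper leaves implicit: absorbing $u^{-\lambda}\in T(\C[[z]])$ into the coset, independence of the transverse coordinate, and the conjugation $\dot w z^{\mu}\dot w^{-1}=z^{w\mu}$.

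Your sign caveat is right too. The double coset of $z^{-\lambda}$ is labelled by the dominant element of $W\cdot(-\lambda)=W\cdot(-w_0\lambda)$. So the paper's ``equivalently the Weyl orbit of $\lambda$'', together with its nonstandard notation $\Gr_G^\lambda$ for the orbit of $z^{-\lambda}$, is only correct when read through the fixed bijection $W\cdot\lambda\mapsto W\cdot(-\lambda)$.
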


\begin{proof}
A change of formal trivialization of a principal \(G\)-bundle over the formal
neighbourhood of \(Q\) acts by \(G(\C[[z]])\).  The modification defined by the
gluing function \(q^{-\lambda}\) is locally the loop \(z^{-\lambda}\in G(\C((z)))\).
Therefore its class modulo changes of formal trivialization is exactly the
stated affine-Grassmannian point.  Allowing gauge transformations on both sides
of the modification gives the corresponding Schubert orbit.  The classification
of such orbits by dominant coweights is standard in the theory of loop groups
and affine Grassmannians \cite{PressleySegal1986,MirkovicVilonen2007,Zhu2016}.
\end{proof}

Now let
\[
\rho:G\longrightarrow GL(V_\rho)
\]
be a finite-dimensional complex representation.  Write its weight decomposition
as
\[
V_\rho=\bigoplus_{\mu\in X^*(T)}V_\mu,
\qquad
m_\mu=\dim V_\mu,
\]
where \(T\) acts on \(V_\mu\) by the character \(\mu\).  The integer
\[
k_\mu:=\langle\mu,\lambda\rangle
\]
is well-defined because \(\lambda\) is a cocharacter.

\begin{definition}\label{def:nonabelian-source-bundle}
The \(\rho\)-associated non-abelian source bundle is
\[
E_{\lambda,\rho}:=\mathcal P_\lambda\times_G V_\rho.
\]
Equivalently, it is the groupoid-equivariant vector bundle whose restrictions
to the two sheets are
\[
E_{\lambda,\rho,+}=X_+\times V_\rho,
\]
and
\[
E_{\lambda,\rho,-}
=
\bigoplus_{\mu}\calO_X(-2k_\mu)\otimes V_\mu.
\]
Over \(U=X\setminus Q\), the two bundles are identified weight-by-weight by
multiplication by
\[
q^{-k_\mu}:\calO_X|_U\otimes V_\mu
\longrightarrow
\calO_X(-2k_\mu)|_U\otimes V_\mu.
\]
This gives a class
\[
[E_{\lambda,\rho}]
=
[\mathcal P_\lambda\times_G V_\rho]
\in K_0(A_Q).
\]
\end{definition}

\begin{proposition}\label{prop:associated-weight-decomposition}
The associated vector bundle of the principal source bundle \(\mathcal P_\lambda\)
in the representation \(\rho\) is exactly the weight-decomposed bundle of
Definition~\ref{def:nonabelian-source-bundle}.
\end{proposition}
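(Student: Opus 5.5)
We compute the associated bundle sheet by sheet and then check that the gluings agree. The only inputs are the definitions of \(\mathcal P_\lambda\) and of the associated-bundle construction \(P\times_G V\) on the groupoid \(\calG_Q\). A principal bundle on \(\calG_Q\) is a pair of bundles on \(X_\pm\) together with a gluing over \(U\), and \(-\times_G V_\rho\) is functorial. Hence \(E_{\lambda,\rho}\) is the pair \((P_{\lambda,+}\times_G V_\rho,\;P_{\lambda,-}\times_G V_\rho)\), glued by the image under \(\rho\) of the principal gluing \(q^{-\lambda}\). So we only need to identify each piece.

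\emph{Plus sheet.} We have \(P_{\lambda,+}=X_+\times G\), so \(P_{\lambda,+}\times_G V_\rho\cong X_+\times V_\rho\) via \([(x,g),v]\mapsto (x,\rho(g)v)\).

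\emph{Minus sheet.} Since \(P_{\lambda,-}=\calL_Q^\times\times_{\C^*,\lambda}G\), associativity of extension of structure group gives
\[
P_{\lambda,-}\times_G V_\rho\cong \calL_Q^\times\times_{\C^*}V_\rho,
\]
where \(\C^*\) acts on \(V_\rho\) through \(\rho\circ\lambda\). On \(V_\mu\), the element \(t\) acts by \(\mu(\lambda(t))=t^{\langle\mu,\lambda\rangle}=t^{k_\mu}\). The \(\C^*\)-representation therefore splits as \(\bigoplus_\mu V_\mu\) with weight \(k_\mu\) on \(V_\mu\). By the stated convention, weight \(k\) produces \(\calL_Q^{k}\cong\calO_X(-2k)\). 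This yields \(\bigoplus_\mu \calO_X(-2k_\mu)\otimes V_\mu\).

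\emph{Gluing.} Over \(U\), the principal gluing is \(q^{-\lambda}\), using the trivialization of \(\calL_Q\) given by \(q^{-1}\). Applying \(\rho\), the map \(\rho(q^{-\lambda})\) acts on \(V_\mu\) by \(\mu(\lambda(q^{-1}))=q^{-k_\mu}\). This matches the weight-by-weight map of Definition~\ref{def:nonabelian-source-bundle}, and the class in \(K_0(A_Q)\) then agrees by construction.

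The main obstacle is sign and convention bookkeeping rather than any conceptual step. We must check that the convention ``weight \(k\) gives \(\calL_Q^k\)'' and the gluing \(q^{-\lambda}\) are mutually consistent, so that the sheet degree \(-2k_\mu\) and the gluing exponent \(-k_\mu\) line up. This has to be done for a local section: interpret \(q^{-k_\mu}\) as a map \(\calO|_U\to\calO(-2k_\mu)|_U\), and check that the induced transition cocycle of \(\calL_Q^{k_\mu}\) agrees with the one from the principal bundle. If the left-multiplication convention for \(\lambda(t)\) introduces an inverse, we absorb it into the definition of the associated-bundle convention, exactly as Remark~\ref{rem:sign} does in the abelian case.

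For \(G=\C^*\), \(\lambda=n\), and \(\rho\) the standard representation, this recovers \(\calC_n\) of Definition~\ref{def:coulomb-bundle}, which serves as a consistency check.
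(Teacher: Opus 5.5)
Your proposal is correct and follows essentially the same route as the paper: the plus sheet is trivial, and on the minus sheet each weight space \(V_\mu\) carries the induced \(\C^*\)-character \(t^{k_\mu}\), which by the stated convention gives \(\calL_Q^{k_\mu}\cong\calO_X(-2k_\mu)\); the gluing then comes from the trivialization \(q^{-1}\) of \(\calL_Q\) over \(U\). Your explicit check that \(\rho(q^{-\lambda})\) acts on \(V_\mu\) by \(q^{-k_\mu}\) simply spells out the paper's one-line gluing remark.
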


\begin{proof}
The principal \(G\)-bundle on the plus sheet is trivial, so the associated vector
bundle is \(X_+\times V_\rho\).  On the minus sheet, the principal \(G\)-bundle is
obtained from \(\calL_Q^\times\) by the cocharacter \(\lambda\).  On the weight
space \(V_\mu\), the induced \(\C^*\)-character is
\[
t\longmapsto t^{\langle\mu,\lambda\rangle}=t^{k_\mu}.
\]
Since \(\calL_Q\cong\calO_X(-2)\), the associated line factor is
\(\calL_Q^{k_\mu}\cong\calO_X(-2k_\mu)\).  This gives the stated direct sum.  The
gluing over \(U\) follows from the nonvanishing trivialization of \(\calL_Q\)
given by \(q^{-1}\).
\end{proof}

For \(G=\C^*\), the standard character has one weight \(\mu=1\).  Taking
\(\lambda(t)=t^n\) gives \(k_\mu=n\), and Definition~\ref{def:nonabelian-source-bundle}
reduces to the abelian Coulomb line bundle convention of
Section~\ref{def:coulomb-bundle}, up to a common sheetwise twist.  Such a common
twist cancels in the relative pairings below.

\section{The line-supported non-abelian source pairing}
\label{sec:nonabelian-line-pairing}

The relative line cocycle \(\varphi_L^{\mathrm{rel}}=\varphi_L^+-\varphi_L^-\) can be paired with the vector bundle class \([E_{\lambda,\rho}]\).  This is the direct non-abelian analogue of the abelian charge pairing, but it is important to understand what it can and cannot detect.

\begin{definition}\label{def:Q1rho}
Define the normalized first source pairing by
\[
\mathcal Q_{1,\rho}(\lambda)
:=
\frac12\left\langle\varphi_L^{\mathrm{rel}},[E_{\lambda,\rho}]\right\rangle.
\]
\end{definition}

\begin{theorem}\label{thm:nonabelian-linear-charge}
With the conventions of Definition~\ref{def:nonabelian-source-bundle},
\[
\mathcal Q_{1,\rho}(\lambda)
=
\sum_\mu m_\mu\langle\mu,\lambda\rangle.
\]
Equivalently,
\[
\mathcal Q_{1,\rho}(\lambda)=\langle \det\rho,\lambda\rangle,
\]
where \(\det\rho\) is regarded as a character of \(T\).
\end{theorem}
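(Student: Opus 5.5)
The plan is to reduce to the abelian computation of Theorem~\ref{thm:charge}, using additivity of the Chern--Connes pairing and the weight decomposition of Proposition~\ref{prop:associated-weight-decomposition}. Since \(\varphi_L^\pm\) factor through the quotient maps \(\rho_\pm:A_Q^\infty\to C^\infty(Q)\), I first note that \(\langle\varphi_L^\pm,[E]\rangle\) depends only on the pulled-back class \((\rho_\pm)_*[E]\in K_0(C(Q))\). For a vector bundle on \(Q\) this class pairs with the commutative integration cocycle over \(L\) as \(\int_L \ch_1=\int_L c_1\), with the same normalization already used in the proof of Theorem~\ref{thm:charge} (degree \(-2n\) line bundle giving \(-2n\)). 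In particular \(\langle\varphi_L^\pm,[E]\rangle=\int_{L_\pm}c_1(E|_{L_\pm})\), which is additive under direct sums.

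Next I would compute the two sheet restrictions. On \(L_+\) the bundle \(E_{\lambda,\rho,+}=X_+\times V_\rho\) is trivial, so the \(\varphi_L^+\)-term vanishes. On \(L_-\), Definition~\ref{def:nonabelian-source-bundle} gives \(E_{\lambda,\rho,-}|_L\cong\bigoplus_\mu\calO_L(-2k_\mu)^{\oplus m_\mu}\), using \(\int_L H=1\). By additivity of \(c_1\), the \(\varphi_L^-\)-term is \(\sum_\mu m_\mu(-2k_\mu)\). Hence
\[
\langle\varphi_L^{\mathrm{rel}},[E_{\lambda,\rho}]\rangle=0-\sum_\mu m_\mu(-2k_\mu)=2\sum_\mu m_\mu\langle\mu,\lambda\rangle,
\]
and halving gives the first formula. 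Alternatively, each weight summand is a Coulomb bundle \(\calC_{k_\mu}\otimes V_\mu\), and Theorem~\ref{thm:charge} applies summand by summand.

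For the second formula, \(\det\rho\) restricted to \(T\) acts on \(\Lambda^{\dim V}V_\rho=\bigotimes_\mu\Lambda^{m_\mu}V_\mu\) by the character \(\sum_\mu m_\mu\mu\). Pairing with \(\lambda\) then gives \(\sum_\mu m_\mu\langle\mu,\lambda\rangle\), since the pairing is bilinear.

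The main obstacle is making the first step rigorous: the class \([E_{\lambda,\rho}]\in K_0(A_Q)\) must map under \((\rho_\pm)_*\) to the class of the sheet bundle restricted to \(Q_\pm\). I would realize \(E_{\lambda,\rho}\) as a projection \(e\in M_N(A_Q^\infty)\). To do this, I would embed each sheet bundle in a trivial bundle compatibly with the gluing over \(U\); the gluing is diagonal by weights, so this can be done weight by weight, as in the rank-one case. Applying \(\rho_\pm\) entrywise then kills the off-diagonal \(C_c^\infty(U)\) entries and yields projections over \(Q\) representing \(E_\pm|_Q\).
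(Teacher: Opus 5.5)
Your computation is the paper's proof. The plus-sheet restriction is trivial, the minus sheet contributes $\sum_\mu m_\mu(-2k_\mu)$ because $\int_L H=1$, halving the difference gives the formula, and $\sum_\mu m_\mu\mu$ is the weight of $\det\rho$. Like the paper, you use only the sheet restrictions of $[E_{\lambda,\rho}]$ and the convention $\langle\varphi_L^\pm,[E]\rangle=\int_{L_\pm}c_1(E)$.

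The step you add to make the $K$-theory rigorous, however, cannot be carried out as described.

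\textbf{Rank obstruction.} A projection $e\in M_N(A_Q^\infty)$ is a continuous projection-valued map $X\to M_{2N}(\C)$ whose off-diagonal blocks vanish on $Q$. Hence $\operatorname{rk}e(x)$ is constant on $X$, and on $Q$ it equals $\operatorname{rk}\rho_+(e)+\operatorname{rk}\rho_-(e)$. At $x\in U$ the module $A_Q^Ne$ localises to $\operatorname{rk}e(x)$ copies of $\C^2$. A rank-$d$ bundle glued over $U$ should instead localise to $E_{+,x}\oplus E_{-,x}$, i.e.\ $d$ copies. So $\operatorname{rk}e=d$, and $\rho_+(e)$ and $\rho_-(e)$ cannot both represent rank-$d$ bundles. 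No projection both encodes the gluing and restricts to $E_{\lambda,\rho,\pm}|_Q$ on the two copies of $Q$.

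\textbf{Your embedding recipe.} Read literally, it also fails more directly. Suppose the two sheet bundles embed into one trivial bundle by maps intertwining $q^{-k_\mu}$. Their image subbundles then agree on $U$, hence on $X$ by density. This forces $\calO_X\otimes V_\mu\cong\calO_X(-2k_\mu)\otimes V_\mu$, which is false for $k_\mu\neq0$.

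\textbf{No rank-one case to copy.} The paper never constructs such a projection. It only asserts $[\calC_n]\in K_0(A_Q)$ via Remark~\ref{rem:Kclass}, and that remark does not apply literally, since $\calG_Q$ is not proper (its orbit space $X_Q$ is non-Hausdorff).

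\textbf{What suffices.} The computation only needs some class whose images under $(\rho_\pm)_*$ are $[E_{\lambda,\rho,\pm}|_Q]$. The block-diagonal projection $\diag(p_+,p_-)\in M_N(A_Q^\infty)$ is such a class, with $p_\pm\in M_N(C^\infty(X))$ representing the two sheet bundles. It lies in $M_N(A_Q^\infty)$ because the diagonal entries may be arbitrary smooth functions on $X$. It does not involve the gluing $q^{-k_\mu}$ at all. If you replace your last paragraph with this observation, your argument is exactly as rigorous as the paper's.
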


\begin{proof}
On the plus sheet,
\[
E_{\lambda,\rho,+}|_L\cong \calO_L\otimes V_\rho,
\]
so
\[
\int_{L_+} c_1(E_{\lambda,\rho})=0.
\]
On the minus sheet,
\[
E_{\lambda,\rho,-}|_L
=
\bigoplus_\mu \calO_L(-2k_\mu)\otimes V_\mu,
\]
and since \(\int_LH=1\),
\[
\int_{L_-}c_1(E_{\lambda,\rho})
=
\sum_\mu m_\mu(-2k_\mu)
=
-2\sum_\mu m_\mu\langle\mu,\lambda\rangle.
\]
Therefore
\[
\left\langle\varphi_L^+-\varphi_L^-,[E_{\lambda,\rho}]\right\rangle
=
0-
\left(-2\sum_\mu m_\mu\langle\mu,\lambda\rangle\right)
=
2\sum_\mu m_\mu\langle\mu,\lambda\rangle.
\]
Multiplication by \(1/2\) gives the stated formula.  The final equality follows because the determinant character of \(\rho\) has weight
\[
\sum_\mu m_\mu\mu.
\]
\end{proof}

\begin{corollary}\label{cor:semisimple-linear-vanishing}
If \(G\) is semisimple and \(\rho\) has trivial determinant, then
\[
\mathcal Q_{1,\rho}(\lambda)=0
\]
for every coweight \(\lambda\).  In particular, the scalar line-supported source pairing does not detect a purely semisimple non-abelian magnetic charge in a determinant-trivial representation.
\end{corollary}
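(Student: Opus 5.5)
The corollary should follow directly from Theorem~\ref{thm:nonabelian-linear-charge}, which gives
\[
\mathcal Q_{1,\rho}(\lambda)=\langle\det\rho,\lambda\rangle,
\qquad
\det\rho|_T=\sum_\mu m_\mu\mu\in X^*(T).
\]
So it suffices to show that the character \(\sum_\mu m_\mu\mu\) of \(T\) is trivial under either hypothesis. Once that holds, \(\langle\det\rho,\lambda\rangle=0\) for every cocharacter \(\lambda\in X_*(T)\), because the pairing \(X^*(T)\times X_*(T)\to\Z\) is bilinear and the trivial character is \(0\in X^*(T)\).

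The first case is the determinant-trivial one. If \(\det\rho:G\to\C^*\) is the trivial homomorphism, then so is its restriction to \(T\). On \(V_\mu\) the torus acts by \(\mu\), so for \(t\in T\) we have \(\det\rho(t)=\prod_\mu\mu(t)^{m_\mu}\), and this equals \(1\) for all \(t\). Hence \(\sum_\mu m_\mu\mu=0\) in the additive notation for \(X^*(T)\).

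The second case is the semisimple one. Here I would observe that the determinant-trivial hypothesis is automatic: \(\det\circ\rho:G\to\C^*\) is a character of \(G\), and a connected semisimple group has no nontrivial characters because it equals its own commutator subgroup while \(\C^*\) is abelian. The reduction to the first case is therefore a standard structural fact and not a real obstacle. At the Lie algebra level it amounts to \(\operatorname{tr}\circ d\rho=0\) on \([\mathfrak g,\mathfrak g]=\mathfrak g\), and restricting to \(\mathfrak t\) gives \(\sum_\mu m_\mu\mu=0\).

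Finally I would add the interpretive remark. The pairing sees only the abelianization \(G\to G/[G,G]\), so a purely semisimple coweight, or a representation with trivial determinant, contributes nothing. This motivates the quadratic pairing \(\mathcal Q_{2,\rho}\), which is not linear in the weights and can detect such coweights.
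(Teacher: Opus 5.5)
Your proposal is correct and follows the paper's proof: both apply Theorem~\ref{thm:nonabelian-linear-charge} and note that a trivial determinant forces $\sum_\mu m_\mu\mu=0$ in $X^*(T)$, so $\langle\det\rho,\lambda\rangle=0$ for every cocharacter $\lambda$. You also observe that, for connected $G$, semisimplicity alone already makes $\det\rho$ trivial because $G=[G,G]$; this correct sharpening is absent from the paper, whose proof uses only the determinant hypothesis.
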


\begin{proof}
If \(\det\rho\) is the trivial character, then
\[
\sum_\mu m_\mu\mu=0.
\]
The formula of Theorem~\ref{thm:nonabelian-linear-charge} gives the result.
\end{proof}

This corollary is not a defect of the construction.  It says exactly what a scalar first Chern-character pairing should say: it sees the abelianized determinant part of the source.  A non-abelian charge requires either Cartan-refined pairings or higher characteristic pairings.

\section{A quadric-supported quadratic source pairing}
\label{sec:quadratic-pairing}
\label{sec:nonabelian-quadratic}

The non-Hausdorff singularity is supported on the doubled quadric \(Q_+\sqcup Q_-\).  Since \(Q\) is a complex surface, a second natural relative cyclic cocycle is obtained by integrating over the whole quadric rather than over a ruling line.  Let
\[
\Phi_Q^+,
\qquad
\Phi_Q^-
\]
be the normalized four-dimensional cyclic cocycles obtained from integration over \(Q_+\) and \(Q_-\), respectively, and let
\[
\Phi_Q^{\mathrm{rel}}=\Phi_Q^+-\Phi_Q^-.
\]
We use the standard Chern--Connes normalization, so that for a vector bundle \(E\)
\[
\left\langle \Phi_Q^\pm,[E]\right\rangle
=
\int_{Q_\pm}\ch_2(E).
\]
This normalization is the usual one in cyclic cohomology and Chern--Weil theory \cite{Connes1985,Connes1994,BottTu1982,MilnorStasheff1974,Khalkhali2009}.

\begin{definition}\label{def:Q2rho}
Define the quadratic source pairing by
\[
\mathcal Q_{2,\rho}(\lambda)
:=
-\frac14
\left\langle \Phi_Q^{\mathrm{rel}},[E_{\lambda,\rho}]\right\rangle.
\]
\end{definition}

\begin{theorem}\label{thm:quadratic-source-pairing}
For the non-abelian coweight source bundle associated with \((\lambda,\rho)\),
\[
\mathcal Q_{2,\rho}(\lambda)
=
\sum_\mu m_\mu\langle\mu,\lambda\rangle^2.
\]
\end{theorem}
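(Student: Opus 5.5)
The plan is to mirror the proof of Theorem~\ref{thm:nonabelian-linear-charge}, replacing the first Chern class on a ruling line by the second Chern character on the whole quadric. By the normalization fixed just before Definition~\ref{def:Q2rho}, the pairing $\langle\Phi_Q^\pm,[E_{\lambda,\rho}]\rangle$ equals $\int_{Q_\pm}\ch_2(E_{\lambda,\rho,\pm}|_Q)$. So the theorem reduces to two sheetwise Chern-character integrals over $Q$, followed by taking the difference and multiplying by $-\tfrac14$.

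First, I treat the plus sheet. By Definition~\ref{def:nonabelian-source-bundle} (equivalently Proposition~\ref{prop:associated-weight-decomposition}), $E_{\lambda,\rho,+}|_Q\cong \calO_Q\otimes V_\rho$ is trivial. Hence $\ch_2=0$ and $\langle\Phi_Q^+,[E_{\lambda,\rho}]\rangle=0$.

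Second, I treat the minus sheet. Additivity of $\ch$ over the weight decomposition gives
\[
\ch_2(E_{\lambda,\rho,-})=\sum_\mu m_\mu\,\ch_2(\calO_X(-2k_\mu))=\sum_\mu m_\mu\,\frac{(-2k_\mu H)^2}{2}=2\sum_\mu m_\mu k_\mu^2\,H^2 .
\]
Restricting to $Q$ and using that $Q$ is a quadric surface in $\CP^3$, so that $\int_Q H^2=\deg Q=2$ (equivalently, $H|_Q$ is the class of bidegree $(1,1)$ on $\CP^1\times\CP^1$, whose square is $2$), I obtain
\[
\int_{Q_-}\ch_2(E_{\lambda,\rho})=4\sum_\mu m_\mu k_\mu^2 .
\]

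Finally, I assemble the two sheets:
\[
\langle\Phi_Q^{\rm rel},[E_{\lambda,\rho}]\rangle=0-4\sum_\mu m_\mu k_\mu^2 .
\]
Multiplying by $-\tfrac14$ gives $\sum_\mu m_\mu\langle\mu,\lambda\rangle^2$, as claimed.

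The only step needing care is the degree computation $\int_Q H^2=2$. It is the counterpart of $\int_L H=1$ in the line case, and it explains the normalization constant $-\tfrac14$: the $4$ is the factor $2$ from $(-2k)^2/2$ times $\deg Q=2$. A second point worth checking is that the relative cocycle $\Phi_Q^{\rm rel}$ actually pairs with $K_0(A_Q)$ through the two quotient maps $\rho_\pm$. This holds exactly as for $\varphi_L^\pm$, since each $\Phi_Q^\pm$ factors through restriction of the diagonal entries to $Q$, so no further justification is needed. I would close by remarking that $\sum_\mu m_\mu\mu\otimes\mu$ is Weyl-invariant, so the result depends only on the Weyl orbit of $\lambda$. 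This is consistent with Proposition~\ref{prop:affine-grassmannian-orbit}.
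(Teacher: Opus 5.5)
Your proposal is correct and follows the paper's proof essentially line by line. The plus-sheet bundle is trivial, so its $\ch_2$ vanishes; each minus-sheet summand $\calO_X(-2k_\mu)|_Q$ contributes $2k_\mu^2H^2$; and $\int_Q H^2=\deg Q=2$ gives $-4\sum_\mu m_\mu k_\mu^2$ for the relative pairing, which the factor $-\tfrac14$ normalizes (your closing Weyl-invariance observation corresponds to the paper's Remark~\ref{rem:gauge-invariance-quadratic} rather than to the proof itself).
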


\begin{proof}
The plus-sheet bundle is trivial, hence
\[
\int_{Q_+}\ch_2(E_{\lambda,\rho,+})=0.
\]
For the minus sheet, write \(k_\mu=\langle\mu,\lambda\rangle\).  The summand \(\calO_X(-2k_\mu)|_Q\) has first Chern class \(-2k_\mu H|_Q\), so
\[
\ch_2(\calO_X(-2k_\mu)|_Q)
=
\frac12(-2k_\mu H)^2
=
2k_\mu^2H^2.
\]
Since \(Q\subset\CP^3\) is a quadric,
\[
\int_QH^2=\deg Q=2.
\]
Therefore
\[
\int_{Q_-}\ch_2(E_{\lambda,\rho,-})
=
\sum_\mu m_\mu\,2k_\mu^2\int_QH^2
=
4\sum_\mu m_\mu k_\mu^2.
\]
Thus
\[
\left\langle\Phi_Q^+-\Phi_Q^-,[E_{\lambda,\rho}]\right\rangle
=
-4\sum_\mu m_\mu\langle\mu,\lambda\rangle^2.
\]
Multiplying by \(-1/4\) gives the formula.
\end{proof}

\begin{remark}\label{rem:gauge-invariance-quadratic}
The number \(\mathcal Q_{2,\rho}(\lambda)\) is invariant under the Weyl action on \(\lambda\), because the weights of \(\rho\) are permuted with multiplicity.  Unlike \(\mathcal Q_{1,\rho}\), it can be nonzero for semisimple groups and determinant-trivial representations.  It is therefore a better scalar diagnostic of semisimple non-abelian source data.  It is still not the full non-abelian charge: the full invariant is the Weyl orbit of the coweight \(\lambda\), or equivalently the corresponding point of the affine Grassmannian orbit \(G(\!\C((Q))\!)/G(\!\C[[Q]]\!)\) determined by \(Q^{-\lambda}\) \cite{KapustinWitten2007,MirkovicVilonen2007,Zhu2016,GarnerPaquette2025}.
\end{remark}

\section{Examples of the non-abelian formula}
\label{sec:nonabelian-examples}

\begin{example}[The abelian case]
Let \(G=\C^*\), let \(\rho\) be the standard character, and let \(\lambda(t)=t^n\).  There is one weight \(\mu=1\), and hence
\[
\mathcal Q_{1,\rho}(\lambda)=n,
\qquad
\mathcal Q_{2,\rho}(\lambda)=n^2.
\]
The first equality is exactly Theorem~\ref{thm:charge}.
\end{example}

\begin{example}[Diagonal \(GL_r\) charge]
Let \(G=GL_r(\C)\) and let \(\rho\) be the standard representation.  For
\[
\lambda(t)=\diag(t^{n_1},\ldots,t^{n_r}),
\qquad n_i\in\Z,
\]
the weights evaluate as \(k_i=n_i\).  Therefore
\[
\mathcal Q_{1,\rho}(\lambda)=\sum_{i=1}^r n_i,
\qquad
\mathcal Q_{2,\rho}(\lambda)=\sum_{i=1}^r n_i^2.
\]
Thus the line-supported pairing records the determinant charge, while the quadric-supported pairing records the squared length of the coweight in the standard representation.
\end{example}

\begin{example}[The semisimple \(SL_r\) case]
Let \(G=SL_r(\C)\) and use the standard representation.  A coweight is represented by
\[
\lambda(t)=\diag(t^{n_1},\ldots,t^{n_r}),
\qquad
\sum_{i=1}^r n_i=0.
\]
Then
\[
\mathcal Q_{1,\rho}(\lambda)=0,
\qquad
\mathcal Q_{2,\rho}(\lambda)=\sum_{i=1}^r n_i^2.
\]
This is the simplest explicit demonstration that the ordinary line-supported scalar pairing sees only the abelianized determinant part, while the quadric-supported pairing sees semisimple magnetic data.
\end{example}

\begin{example}[The adjoint representation]
Let \(G=SL_r(\C)\) and let \(\rho=\mathrm{Ad}\).  The nonzero weights are the roots \(\epsilon_i-\epsilon_j\), \(i\neq j\).  For \(\lambda=(n_1,\ldots,n_r)\) with \(\sum_i n_i=0\),
\[
\mathcal Q_{1,\mathrm{Ad}}(\lambda)=0,
\]
while
\[
\mathcal Q_{2,\mathrm{Ad}}(\lambda)
=
\sum_{i\neq j}(n_i-n_j)^2
=
2r\sum_{i=1}^r n_i^2.
\]
In particular, for \(SL_2\) with \(\lambda(t)=\diag(t^n,t^{-n})\), the adjoint quadratic source pairing is
\[
\mathcal Q_{2,\mathrm{Ad}}(\lambda)=8n^2.
\]
\end{example}

\begin{remark}\label{rem:cartan-refined}
A fully non-abelian charge is not a single scalar.  The scalar pairings \(\mathcal Q_{1,\rho}\) and \(\mathcal Q_{2,\rho}\) are representation-dependent Weyl-invariant moments of the coweight.  If one fixes a Cartan reduction, then the individual integers \(\langle\mu,\lambda\rangle\) can be recovered by pairing with the weight-summand projections.  Such Cartan-refined pairings are useful for computations but are not invariant under arbitrary \(G\)-gauge transformations; the gauge-invariant datum is the Weyl orbit of \(\lambda\).  This is exactly the usual distinction between a chosen diagonal form of a non-abelian magnetic charge and its invariant conjugacy class \cite{GoddardNuytsOlive1977,KapustinWitten2007}.
\end{remark}

\section{What the groupoid formalism adds for twistor geometry}
\label{sec:ncg-advertisement}

The computations above are elementary once the correct groupoid model has been
written down.  This is precisely the point.  Ordinary differential geometry and
ordinary algebraic topology are designed for manifolds, or at least for spaces
with enough separation and local structure to support the standard operations of
restriction, integration, vector bundles and characteristic classes.  The
Penrose--Sparling construction deliberately leaves the Hausdorff category: the
points of \(Q_+\) and \(Q_-\) are not separated.  One may still work with
relative cohomology, as in the classical twistor-source literature
\cite{Bailey1985,BaileySinger1989,HughstonMason1990}, but the quotient itself is
not an ordinary Hausdorff twistor space.

The noncommutative-geometric replacement keeps the gluing relation rather than
collapsing it into a badly separated point set.  In the present example this
produces four concrete tools.

First, the groupoid algebra \(A_Q\) records both the identified complement and
the doubled source locus.  The exact sequence
\[
0\to C_0(X\setminus Q,M_2(\C))\to A_Q\to C(Q)\oplus C(Q)\to0
\]
is a precise algebraic encoding of the non-Hausdorff twistor double.  The matrix
algebra on \(X\setminus Q\) remembers that the two sheets have been identified;
the two diagonal quotients over \(Q\) remember that the source divisor has two
non-separated copies.

Second, source bundles become \(K\)-classes.  The Coulomb line bundle is not an
external decoration but an element of \(K_0(A_Q)\).  In the non-abelian case the
coweight source bundle \(E_{\lambda,\rho}\) gives another \(K_0(A_Q)\)-class.
This is exactly the mechanism familiar from noncommutative geometry: geometric
objects over a singular quotient are represented by modules or \(K\)-classes
\cite{Connes1994,Blackadar1998,Khalkhali2009}.

Third, source cycles become cyclic cocycles.  The relative cycle \(L_+-L_-\)
becomes \(\varphi_L^+-\varphi_L^-\), and the doubled quadric becomes
\(\Phi_Q^+-\Phi_Q^-\).  This is the same formal passage from cycles to cyclic
cohomology that underlies Connes' transverse fundamental class for foliations
\cite{ConnesTF1986,Connes1994,Crainic1999,CrainicMoerdijk2000}.

Fourth, the numerical charges are Chern--Connes pairings.  The abelian charge,
the determinant part of a non-abelian charge, and the quadratic coweight moment
all arise from the same operation:
\[
\text{cyclic cocycle}\quad\times\quad K\text{-class}
\quad\longrightarrow\quad \text{number}.
\]
This is the direct continuation of the NCFI pairing philosophy, but with the
\(K\)-class and the cyclic cocycle chosen to match twistor-source geometry
rather than foliation transverse tangent geometry.

Thus the slogan is not that noncommutative geometry replaces twistor theory.
Rather, it supplies a technical language for the precise place where classical
twistor geometry leaves the Hausdorff category.  In this example that language
is not decorative: it produces the algebra \(A_Q\), the exact sequence above,
the source \(K\)-classes, and the pairings that recover charge.

\section{Interpretation}

The calculation should be read as a controlled experiment with the NCFI
philosophy.  Starting from the strict NCFI prescription, one is led first to the
tangent-module class \([T_{\mathbb R}\CP^3\otimes\C]\).  That pairing vanishes
for a structural reason: the top Chern-character component of
\(T_{\mathbb R}\CP^3\otimes\C\) is zero.  Thus it would be misleading to
interpret the strict tangent-module analogue as a measure of non-Hausdorffness
or as a direct detector of the Coulomb source.  This caution is important:
Connes-style noncommutative geometry does not generally assign a scalar
``degree of noncommutativity'' to a quotient, and the NCFI is not such a scalar
\cite{Connes1994,Zois2000,ZoisInvariants}.

The source-sensitive pairing is different.  It uses the groupoid presentation of the non-Hausdorff gluing, the \(K\)-class of the Coulomb line bundle, and a relative cyclic cocycle supported on the two non-separated copies of a line in the quadric.  This pairing detects exactly the discontinuity of the line-bundle degree across the two sheets, and after the natural factor \(1/2\) it recovers the Coulomb charge.  In this sense the construction is a noncommutative-geometric analogue of the relative-cohomological source formalism in twistor theory \cite{Bailey1985,BaileySinger1989,HughstonMason1990}, with the groupoid algebra replacing the non-Hausdorff quotient and the Chern--Connes pairing replacing an ordinary integral over a Hausdorff space.

This is the point at which noncommutative geometry adds useful structure: it replaces the non-Hausdorff quotient by an explicit groupoid algebra and turns the source data into a computable Chern--Connes pairing.  The result is compatible with, but distinct from, modern noncommutative twistor-space gluing problems \cite{MarcolliPenrose2020} and current twistorial line-defect constructions \cite{GarnerPaquette2025}.

\section{Outlook}

The construction above began as an attempt to push the NCFI search for
non-trivial examples into a non-Hausdorff twistor setting.  The result is more
instructive than a simple nonzero value: the strict tangent-module version
vanishes, while source-adapted cyclic pairings recover charge.  The article
includes the principal \(G\)-bundle form of the sourced Ward construction: a
coweight \(\lambda\) determines a groupoid principal bundle \(\mathcal P_\lambda\),
its formal normal type is the affine-Grassmannian Schubert orbit
\(\Gr_G^\lambda\), and every representation produces computable associated
\(K\)-classes and Chern--Connes pairings.

Several extensions remain natural.  First, one can replace the line
\(L\subset Q\) and the quadric \(Q\) by more general source cycles; these should
pair with higher multipole data in the relative-cohomological descriptions of
sourced fields \cite{Bailey1985,BaileySinger1989}.  Second, one should develop a
categorified version in which the affine-Grassmannian orbit itself, rather than
only its scalar characteristic moments in representations, is paired with
noncommutative-geometric data.  This would bring the construction closer to the
geometric-Satake and Wilson--'t Hooft line-operator frameworks
\cite{KapustinWitten2007,MirkovicVilonen2007,Zhu2016}.  Third, modern treatments
of twistorial line defects and holomorphic gauge theories on non-Hausdorff
twistor spaces suggest a quantum-field-theoretic setting in which the same
groupoid-algebra viewpoint may be useful
\cite{GarnerPaquette2025,AdamoBognaMasonSharma2025,Witten2004}.

A more ambitious sequel would replace the elementary gluing groupoid by a genuinely non-Hausdorff holonomy or germ groupoid.  That direction would bring the present construction even closer to the foliation algebras and transverse fundamental cyclic cocycles of Connes \cite{ConnesSurvey1982,ConnesTF1986,ConnesSkandalis1984}, and to the NCFI motivation of \cite{Zois2000,ZoisMTheory,ZoisInvariants}.


\begin{thebibliography}{99}

\bibitem{AdamoBognaMasonSharma2025}
Adamo, T., Bogna, G., Mason, L., and Sharma, A.,
Gluon scattering on the self-dual dyon,
\emph{Lett. Math. Phys.} \textbf{115} (2025), article no.~18.

\bibitem{AtiyahDrinfeldHitchinManin1978}
Atiyah, M.~F., Drinfeld, V.~G., Hitchin, N.~J., and Manin, Yu.~I.,
Construction of instantons,
\emph{Phys. Lett. A} \textbf{65} (1978), no.~3, 185--187.

\bibitem{AtiyahDunajskiMason2017}
Atiyah, M., Dunajski, M., and Mason, L.~J.,
Twistor theory at fifty: from contour integrals to twistor strings,
\emph{Proc. Roy. Soc. A} \textbf{473} (2017), 20170530.

\bibitem{AtiyahHitchinSinger1978}
Atiyah, M.~F., Hitchin, N.~J., and Singer, I.~M.,
Self-duality in four-dimensional Riemannian geometry,
\emph{Proc. Roy. Soc. Lond. A} \textbf{362} (1978), 425--461.

\bibitem{AtiyahKTheory1967}
Atiyah, M.~F.,
\emph{K-Theory},
W.~A. Benjamin, New York, 1967.

\bibitem{AtiyahWard1977}
Atiyah, M.~F., and Ward, R.~S.,
Instantons and algebraic geometry,
\emph{Commun. Math. Phys.} \textbf{55} (1977), no.~2, 117--124.

\bibitem{Bailey1985}
Bailey, T.~N.,
Twistors and fields with sources on world-lines,
\emph{Proc. Roy. Soc. Lond. A} \textbf{397} (1985), 143--155.

\bibitem{BaileySinger1989}
Bailey, T.~N., and Singer, M.~A.,
On the twistor description of sourced fields,
\emph{Proc. Roy. Soc. Lond. A} \textbf{422} (1989), 367--385.

\bibitem{BastonEastwood1989}
Baston, R.~J., and Eastwood, M.~G.,
\emph{The Penrose Transform: Its Interaction with Representation Theory},
Oxford Mathematical Monographs, Oxford University Press, 1989.

\bibitem{BeauvilleLaszlo1995}
Beauville, A., and Laszlo, Y.,
Un lemme de descente,
\emph{C. R. Acad. Sci. Paris S\'er. I Math.} \textbf{320} (1995), no.~3, 335--340.

\bibitem{Blackadar1998}
Blackadar, B.,
\emph{K-Theory for Operator Algebras}, second edition,
Mathematical Sciences Research Institute Publications, Vol.~5,
Cambridge University Press, 1998.

\bibitem{BottTu1982}
Bott, R., and Tu, L.~W.,
\emph{Differential Forms in Algebraic Topology},
Graduate Texts in Mathematics, Vol.~82, Springer, 1982.

\bibitem{Connes1980}
Connes, A.,
\(C^*\)-alg\`ebres et g\'eom\'etrie diff\'erentielle,
\emph{C. R. Acad. Sci. Paris Ser. A-B} \textbf{290} (1980), A599--A604.

\bibitem{ConnesThom1981}
Connes, A.,
An analogue of the Thom isomorphism for crossed products of a \(C^*\)-algebra by an action of \(\mathbb R\),
\emph{Adv. Math.} \textbf{39} (1981), 31--55.

\bibitem{ConnesSurvey1982}
Connes, A.,
A survey of foliations and operator algebras,
\emph{Proc. Sympos. Pure Math.} \textbf{38} (1982), 521--628.

\bibitem{Connes1985}
Connes, A.,
Noncommutative differential geometry,
\emph{Inst. Hautes Etudes Sci. Publ. Math.} \textbf{62} (1985), 41--144.

\bibitem{ConnesTF1986}
Connes, A.,
Cyclic cohomology and the transverse fundamental class of a foliation,
in \emph{Geometric Methods in Operator Algebras} (Kyoto, 1983),
Pitman Research Notes in Mathematics, Vol.~123,
Longman, Harlow, 1986, pp.~52--144.

\bibitem{Connes1994}
Connes, A.,
\emph{Noncommutative Geometry},
Academic Press, San Diego, 1994.

\bibitem{ConnesMoscovici1995}
Connes, A., and Moscovici, H.,
The local index formula in noncommutative geometry,
\emph{Geom. Funct. Anal.} \textbf{5} (1995), 174--243.

\bibitem{ConnesSkandalis1984}
Connes, A., and Skandalis, G.,
The longitudinal index theorem for foliations,
\emph{Publ. Res. Inst. Math. Sci.} \textbf{20} (1984), 1139--1183.

\bibitem{Crainic1999}
Crainic, M.,
Cyclic cohomology of \'etale groupoids: the general case,
\emph{K-Theory} \textbf{17} (1999), 319--362.

\bibitem{CrainicMoerdijk2000}
Crainic, M., and Moerdijk, I.,
A homology theory for \'etale groupoids,
\emph{J. Reine Angew. Math.} \textbf{521} (2000), 25--46.

\bibitem{Cuntz1997}
Cuntz, J.,
Bivariant \(K\)-theory for locally convex algebras and the Chern--Connes character,
\emph{Doc. Math.} \textbf{2} (1997), 139--182.

\bibitem{CuntzQuillen1995}
Cuntz, J., and Quillen, D.,
Cyclic homology and nonsingularity,
\emph{J. Amer. Math. Soc.} \textbf{8} (1995), 373--442.

\bibitem{CuntzQuillen1997}
Cuntz, J., and Quillen, D.,
Excision in bivariant periodic cyclic cohomology,
\emph{Invent. Math.} \textbf{127} (1997), 67--98.

\bibitem{Davidson1996}
Davidson, K.~R.,
\emph{\(C^*\)-Algebras by Example},
Fields Institute Monographs, Vol.~6, American Mathematical Society, 1996.

\bibitem{EastwoodPenroseWells1981}
Eastwood, M.~G., Penrose, R., and Wells, R.~O., Jr.,
Cohomology and massless fields,
\emph{Comm. Math. Phys.} \textbf{78} (1981), 305--351.

\bibitem{Fulton1998}
Fulton, W.,
\emph{Intersection Theory}, second edition,
Ergebnisse der Mathematik und ihrer Grenzgebiete, Vol.~2,
Springer, 1998.

\bibitem{GarnerPaquette2025}
Garner, N., and Paquette, N.~M.,
Scattering off of twistorial line defects,
\emph{J. High Energy Phys.} \textbf{2025}, no.~5, article no.~228.

\bibitem{GoddardNuytsOlive1977}
Goddard, P., Nuyts, J., and Olive, D.~I.,
Gauge theories and magnetic charge,
\emph{Nucl. Phys. B} \textbf{125} (1977), 1--28.

\bibitem{GriffithsHarris1978}
Griffiths, P., and Harris, J.,
\emph{Principles of Algebraic Geometry},
Wiley, 1978.

\bibitem{Hartshorne1977}
Hartshorne, R.,
\emph{Algebraic Geometry},
Graduate Texts in Mathematics, Vol.~52, Springer, 1977.

\bibitem{HughstonMason1990}
Hughston, L.~P., and Mason, L.~J. (eds.),
\emph{Further Advances in Twistor Theory. Volume I: The Penrose Transform and its Applications},
Pitman Research Notes in Mathematics Series, Vol.~231,
Longman, Harlow, 1990.

\bibitem{KapustinWitten2007}
Kapustin, A., and Witten, E.,
Electric-magnetic duality and the geometric Langlands program,
\emph{Commun. Number Theory Phys.} \textbf{1} (2007), 1--236.

\bibitem{Khalkhali2009}
Khalkhali, M.,
\emph{Basic Noncommutative Geometry},
EMS Series of Lectures in Mathematics, European Mathematical Society, 2009.

\bibitem{Loday1998}
Loday, J.-L.,
\emph{Cyclic Homology}, second edition,
Grundlehren der mathematischen Wissenschaften, Vol.~301,
Springer, 1998.

\bibitem{MarcolliPenrose2020}
Marcolli, M., and Penrose, R.,
Gluing noncommutative twistor spaces,
arXiv:2012.02823.

\bibitem{MasonWoodhouse1996}
Mason, L.~J., and Woodhouse, N.~M.~J.,
\emph{Integrability, Self-Duality, and Twistor Theory},
London Mathematical Society Monographs, New Series, Vol.~15,
Oxford University Press, 1996.

\bibitem{MilnorStasheff1974}
Milnor, J.~W., and Stasheff, J.~D.,
\emph{Characteristic Classes},
Annals of Mathematics Studies, Vol.~76,
Princeton University Press, 1974.

\bibitem{MirkovicVilonen2007}
Mirkovi\'c, I., and Vilonen, K.,
Geometric Langlands duality and representations of algebraic groups over commutative rings,
\emph{Ann. of Math. (2)} \textbf{166} (2007), no.~1, 95--143.

\bibitem{Moerdijk1995}
Moerdijk, I.,
\emph{Classifying Spaces and Classifying Topoi},
Lecture Notes in Mathematics, Vol.~1616, Springer, 1995.

\bibitem{MoerdijkMrcun2003}
Moerdijk, I., and Mr\v{c}un, J.,
\emph{Introduction to Foliations and Lie Groupoids},
Cambridge Studies in Advanced Mathematics, Vol.~91,
Cambridge University Press, 2003.

\bibitem{MuhlyRenaultWilliams1987}
Muhly, P.~S., Renault, J.~N., and Williams, D.~P.,
Equivalence and isomorphism for groupoid \(C^*\)-algebras,
\emph{J. Operator Theory} \textbf{17} (1987), 3--22.

\bibitem{Paterson1999}
Paterson, A.~L.~T.,
\emph{Groupoids, Inverse Semigroups, and their Operator Algebras},
Progress in Mathematics, Vol.~170, Birkhauser, 1999.

\bibitem{Penrose1967}
Penrose, R.,
Twistor algebra,
\emph{J. Math. Phys.} \textbf{8} (1967), 345--366.

\bibitem{Penrose1976}
Penrose, R.,
Nonlinear gravitons and curved twistor theory,
\emph{Gen. Relativity Gravitation} \textbf{7} (1976), 31--52.

\bibitem{PenroseRindler1}
Penrose, R., and Rindler, W.,
\emph{Spinors and Space-Time. Vol. 1: Two-Spinor Calculus and Relativistic Fields},
Cambridge Monographs on Mathematical Physics, Cambridge University Press, 1984.

\bibitem{PenroseRindler2}
Penrose, R., and Rindler, W.,
\emph{Spinors and Space-Time. Vol. 2: Spinor and Twistor Methods in Space-Time Geometry},
Cambridge Monographs on Mathematical Physics, Cambridge University Press, 1986.

\bibitem{PenroseSparling1979}
Penrose, R., and Sparling, G.~A.~J.,
The anti-self-dual Coulomb field's non-Hausdorff twistor space,
\emph{Twistor Newsletter} \textbf{9} (1979); reprinted in \cite{HughstonMason1990}.

\bibitem{PressleySegal1986}
Pressley, A., and Segal, G.,
\emph{Loop Groups},
Oxford Mathematical Monographs, Oxford University Press, 1986.

\bibitem{RaeburnWilliams1998}
Raeburn, I., and Williams, D.~P.,
\emph{Morita Equivalence and Continuous-Trace \(C^*\)-Algebras},
Mathematical Surveys and Monographs, Vol.~60,
American Mathematical Society, 1998.

\bibitem{Renault1980}
Renault, J.,
\emph{A Groupoid Approach to \(C^*\)-Algebras},
Lecture Notes in Mathematics, Vol.~793, Springer, 1980.

\bibitem{Tu2004}
Tu, J.-L.,
Non-Hausdorff groupoids, proper actions and \(K\)-theory,
\emph{Doc. Math.} \textbf{9} (2004), 565--597.

\bibitem{Ward1977}
Ward, R.~S.,
On self-dual gauge fields,
\emph{Phys. Lett. A} \textbf{61} (1977), 81--82.

\bibitem{Ward1985}
Ward, R.~S.,
Integrable and solvable systems, and relations among them,
\emph{Philos. Trans. R. Soc. Lond. A} \textbf{315} (1985), 451--457.

\bibitem{WardWells1990}
Ward, R.~S., and Wells, R.~O., Jr.,
\emph{Twistor Geometry and Field Theory},
Cambridge Monographs on Mathematical Physics, Cambridge University Press, 1990.

\bibitem{Williams2007}
Williams, D.~P.,
\emph{Crossed Products of \(C^*\)-Algebras},
Mathematical Surveys and Monographs, Vol.~134,
American Mathematical Society, 2007.

\bibitem{Witten2004}
Witten, E.,
Perturbative gauge theory as a string theory in twistor space,
\emph{Comm. Math. Phys.} \textbf{252} (2004), 189--258.

\bibitem{WoodhouseMason1988}
Woodhouse, N.~M.~J., and Mason, L.~J.,
The Geroch group and non-Hausdorff twistor spaces,
\emph{Nonlinearity} \textbf{1} (1988), 73--114.

\bibitem{Zhu2016}
Zhu, X.,
An introduction to affine Grassmannians and the geometric Satake equivalence,
arXiv:1603.05593.

\bibitem{Zois2000}
Zois, I.~P.,
A New Invariant for \(\sigma\)-Models,
\emph{Commun. Math. Phys.} \textbf{209} (2000), 757--783.

\bibitem{ZoisMTheory}
Zois, I.~P.,
\emph{The Godbillon--Vey class, invariants of manifolds and linearised M-Theory},
arXiv:hep-th/0006169.

\bibitem{ZoisKTheory}
Zois, I.~P.,
\emph{18 Lectures on K-Theory},
arXiv:1008.1346.

\bibitem{ZoisInvariants}
Zois, I.~P.,
Towards Noncommutative Topological Quantum Field Theory: New invariants for 3-manifolds,
\emph{J. Phys.: Conf. Ser.} \textbf{738} (2016), 012030.

\end{thebibliography}
\end{document}